\newcommand{\jc}[1]{\ifthenelse{\equal{\discussion}{true}}{\textcolor{blue}{\textbf{\scriptsize [JC: #1]}}}{}}
\newcommand{\sm}[1]{\ifthenelse{\equal{\discussion}{true}}{\textcolor{green}{\textbf{\scriptsize [SM: #1]}}}{}}
\newcommand{\rj}[1]{\ifthenelse{\equal{\discussion}{true}}{\textcolor{red}{\textbf{\scriptsize [RJ: #1]}}}{}}
\newcommand{\modif}[1]{#1}
\newcommand{\vect}[1]{#1} 
\newcommand{\m}[1]{\mathbf{#1}} 
\newcommand{\R}{\mathbb{R}}
\newcommand{\N}{\mathbb{N}}
\newcommand{\Z}{\mathbb{Z}}
\newcommand{\myemptyset}{\varnothing}
\newtheorem{theorem}{Theorem}
\newtheorem{proposition}{Proposition}
\newtheorem{definition}{Definition}
\newtheorem{property}{Property}
\newcommand{\set}[1]{\mathcal{#1}}
\newcommand{\Sys}{\mathcal{S}}
\newcommand{\Cont}{\mathcal{C}}
\newcommand{\seq}[1]{\boldsymbol{#1}} 
\newcommand{\relOne}{alternating simulation relation} 
\newcommand{\RelOne}{Alternating simulation relation} 
\newcommand{\relOneAbr}{\operatorname{ASR}} 
\newcommand{\propOne}{controlled simulability property} 
\newcommand{\PropOne}{Controlled simulability property} 
\newcommand{\relTwo}{memoryless concretization relation} 
\newcommand{\RelTwo}{Memoryless concretization relation} 
\newcommand{\relTwoAbr}{\operatorname{MCR}} 
\newcommand{\propTwo}{memoryless concretization property} 
\newcommand{\PropTwo}{Memoryless concretization property} 
\newcommand{\controllerTwo}{memoryless concretized controller} 
\newcommand{\ControllerTwo}{Memoryless concretized controller} 
\newcommand{\relThree}{feedback refinement relation} 
\newcommand{\relThreeAbr}{\operatorname{FRR}}
\newcommand{\T}{\operatorname{T}}
\author{Julien Calbert \and Sébastien Mattenet \and Antoine Girard \and Raphaël M. Jungers
\thanks{*JC is a FRIA Research Fellow. This project has received funding from the H2020-EU.1.1. research and innovation programme(s) -- ERC-2016-COG under grant agreement No 725144.
RJ is a FNRS honorary Research Associate. This project has received funding from the European Research Council (ERC) under the European Union's Horizon 2020 research and innovation programme under grant agreement No 864017 - L2C.
}
\thanks{
J.~Calbert, S.~Mattenet and R.~M.~Jungers are with the ICTEAM Institute, UCLouvain. A. Girard is with the L2S-CNRS, CentraleSupélec, Université Paris-Saclay.
{\tt\small \{julien.calbert,sebastien.mattenet,}
{\tt\small raphael.jungers\}@uclouvain.be},
{\tt\small antoine.girard@centralesupelec.fr}}
} 
\newcommand{\discussion}{true}
\begin{document}

\title{\RelTwo}

\maketitle

\begin{abstract}


We introduce the concept of \relTwo{} ($\relTwoAbr$) to describe abstraction within the context of controller synthesis. This relation is a specific instance of~\relOne{} ($\relOneAbr$), where it is possible to simplify the controller architecture. In the case of $\relOneAbr$, the concretized controller needs to simulate the concurrent evolution of two systems, the original and abstract systems, while for~$\relTwoAbr$, the designed controllers only need knowledge of the current concrete state.
We demonstrate that the distinction between $\relOneAbr$ and $\relTwoAbr$ becomes significant only when a non-deterministic quantizer is involved, such as in cases where the state space discretization consists of overlapping cells. We also show that any abstraction of a system that alternatingly simulates a system can be completed to satisfy~$\relTwoAbr$ at the expense of increasing the non-determinism in the abstraction.
We clarify the difference between the $\relTwoAbr$ and the \relThree{} ($\relThreeAbr$), showing in particular that the former allows for non-constant controllers within cells.
This provides greater flexibility in constructing a practical abstraction, for instance, by reducing non-determinism in the abstraction.
Finally, we prove that this relation is not only sufficient, but also necessary, for ensuring the above properties. 


\end{abstract}
\section{Introduction}\label{Sec:Intro}

\emph{Abstraction-based control} techniques involve synthesizing a correct-by-construction controller through a systematic three-step procedure illustrated on~\Cref{fig:abstraction-procedure}. %
First, both the original system and the specifications are transposed into an abstract domain, resulting in an \emph{abstract system} and corresponding abstract specifications.
In this paper, we refer to the original system as the \emph{concrete} system as opposed to the abstract system.
Next, an abstract controller is synthesized to solve this abstract control problem. Finally, in the third step, called \emph{concretization}\footnote{Note that in other works such as~\cite{reissig2016feedback}, this step is called \emph{refinement procedure}.} as opposed to \emph{abstraction}, a controller for the original control problem is derived from the abstract controller.
The value of this approach lies in the substitution of the concrete system (often a system with an infinite number of states) with a finite state system, which makes it possible to leverage powerful control tools (see~\cite{belta2017formal,kupferman2001model}), for example from graph theory, such as Dijkstra, the $A^*$ algorithm or dynamic programming, allowing to design correct-by-construction controllers, often with rigorous (safety, or performance) guarantees.
Abstraction-based controller design crucially relies on the existence of a \emph{simulation} relation between the concrete and abstract states, allowing to infer the control actions in the concrete system from the ones actuated in the corresponding states of the abstract system.
Most approaches are based on the \emph{\relOne{}} ($\relOneAbr$)~\cite{alur1998alternating}, which provides a framework for guaranteeing a concretization step that provably ensures the specifications for the concrete system. 

\begin{figure}
    \centering
    \includegraphics[width=0.48\textwidth]{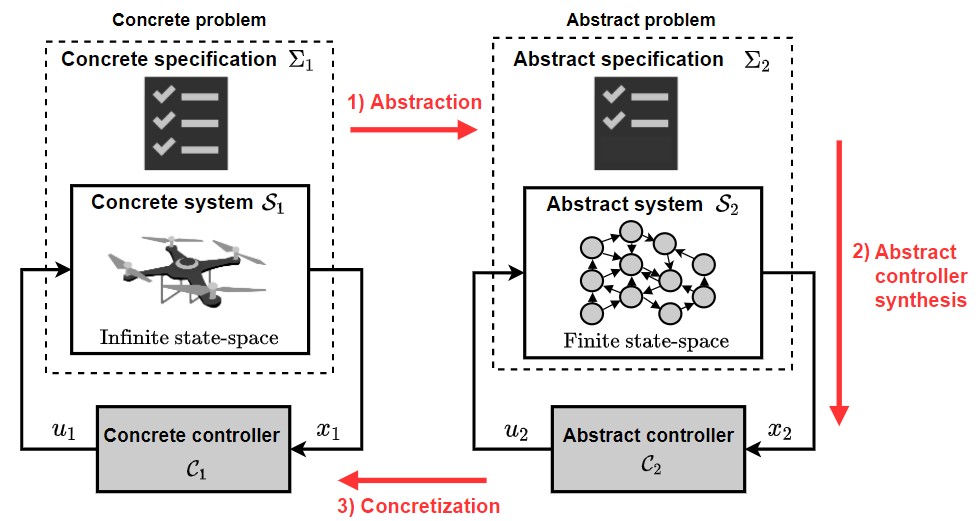}
    \caption{The three steps of abstraction-based control. Our work focuses on the algorithmic burden necessary for the third step, and characterizes the relation between the systems allowing for a memoryless algorithm at step three.}
    \label{fig:abstraction-procedure}%
\end{figure}


Although the \relOne{} offers a safety critical framework, it has several practical drawbacks. The first issue is that this relation provides no complexity guarantee on the concretization procedure, i.e., the concrete controller could contain the entire abstraction (which is typically made up of millions of states and transitions) as a building block, we refer to this problem as the \emph{concretization complexity issue}. 
See \cite[Proposition 8.7]{tabuada2009verification} for a description of the algorithmic concretization procedure.
The second limitation is that, although $\relOneAbr$ guarantees the existence of a controller for the concrete system such that the closed-loop systems share the same behavior, it does not guarantee that some of these properties will be transferred to the concrete controller during the concretization step.

For this reason, various relations have been introduced in the literature, e.g. \cite{rungger2016scots, borri2018design, egidio2022state, majumdar2020abstraction}, to tackle specific shortcomings of the \relOne{}. %
Some works propose abstraction-based techniques that do not suffer from this concretization complexity issue, but they are either limited to subsets of specification such as safety or reachability~\cite{girard2012controller, dallal2013supervisory, grune2007approximately,hsu2018lazy}, or for a specific class of dynamical systems, e.g. incrementally stable ~\cite{girard2012controller}, piecewise
affine (PWA) dynamics~\cite{yordanov2011temporal}.
Other abstraction-based controller synthesis methods circumvent the concretization complexity issue by constructing abstractions without overlapping cells~\cite{girard2013low, yordanov2011temporal, grune2007approximately, hsu2019lazy, legat2021abstraction, calbert2021alternating}.
In particular, in~\cite{rungger2016scots}, the authors present the~\emph{\relThree{}} ($\relThreeAbr$) which provides a framework for an extremely simple controller architecture, which is not restricted to certain types of specifications or systems, and, as we shall see, forms the basis of our reflection.


In this paper, we study the properties that a simulation relation might require, and establish their implications on controller design characteristics. Specifically, we analyze how the relation established between the concrete and abstract systems during the abstraction phase affects the concretization process, while the methods used to compute abstractions or solve the resulting abstract control problems are beyond the scope of this work.
We introduce the~\emph{\relTwo{}}~($\relTwoAbr$) which guarantees a simple control architecture for the concrete system that is not problem-specific. Unlike the general case of~\relOne{}, the controller only needs information about the current concrete state and does not need to simulate the concurrent evolution of the concrete and abstract systems. 
We show that in the specific case of a deterministic quantizer (that is, a single-valued map), \relOne{} and \relTwo{} coincide. As a consequence, the~\relTwo{} turns out to be essential in the context of an abstraction composed of overlapping cells, a framework we consider crucial for building a non-trivial smart abstraction, see~\cite{egidio2022state}. 
We prove that $\relTwoAbr$ is not only sufficient to guarantee the functionality of this memoryless controller architecture, but also necessary for it to work with all abstract controllers, i.e.,  regardless of the specifications.
Finally, we propose an in-depth discussion of~\relThree{} which turns out to be a special case of \relTwo{}, with the additional requirement to use only symbolic state information.  As a result, \relThree{} is limited to the design of piecewise constant controllers, whereas~\relTwo{} allows the design of piecewise state-dependent controllers. We illustrate on a simple example the advantage of~\relTwo{} over \relThree{} when concrete state information is available.

\noindent \textbf{Notation:} 
The sets $\R,\Z, \Z_+$ denote respectively the sets of real numbers, integers and non-negative integer numbers.
For example, we use $[a,b]\subseteq \R$ to denote a closed continuous interval and $[a;b] = [a,b]\cap \Z$ for discrete intervals. The symbol $\myemptyset$ denotes the empty set.
Given two sets $A,B$, we define a \emph{single-valued map} as $f:A\rightarrow B$, while a~\emph{set-valued map} is defined as $f:A\rightarrow 2^B$, where $2^B$ is the power set of $B$, i.e., the set of all subsets of $B$. 
The image of a subset $\Omega\subseteq A$ under $f:A\rightarrow 2^B$ is denoted $f(\Omega)$.
We identify a binary \emph{relation} $R\subseteq A \times B$ with set-valued maps, i.e., $R(a) = \{b \mid (a, b)\in R\}$ and $R^{-1}(b) =\{ a \mid (a, b)\in R\}$.
Given a set $A$, we denote the binary \emph{identity} relation $Id_A\subseteq A\times A$ such that $(x , y)\in Id_A \Leftrightarrow x=y$.
A relation $R\subseteq A\times B$ is \emph{strict} and \emph{single-valued} if for every $a\in A$ the set $R(a)\ne \myemptyset$ and $R(a)$ is a singleton, respectively. 
Given two set-valued maps~$f,g$, we denote by $f\circ g$ their composition $(f\circ g)(x) = f(g(x))$.
If $R\subseteq A\times B$ and $Q\subseteq B\times C$, then their composition $R\circ Q =\{(a, c)\in A\times C\ \mid \exists b\in B \text{ such that } (a,b)\in R \text{ and } (b, c)\in Q\}$.

\section{Preliminaries}\label{Sec:Preliminairies}

In this section, we start by defining the considered control framework, i.e., the systems, the controllers and the specifications.

We consider dynamical systems of the following form.
\begin{definition}\label{def:sys}
A \emph{transition control system} is a tuple $\Sys\coloneqq (\set{X}, \set{U}, F)$ where $\set{X}$ and $\set{U}$ are respectively the set of states and inputs and the set-valued map $F:\set{X}\times \set{U}\rightarrow 2^{\set{X}}$ where $F(x,u)$ give the set of states that can be reached from a given state $\vect x$ under a given input $\vect u$.
\hfill$\bigtriangledown$
\end{definition}
We introduce the set-valued operator of  \emph{available inputs}, defined as $\mathcal{U}_F(x) = \{ \vect u \in \set{U} \mid F(x,u) \neq \myemptyset \}$, which give the set of inputs $\vect u$ available at a given state $\vect x$. When it is clear from the context which system it refers to, we simply write the available inputs operator $\set{U}(x)$.
In this paper, we consider \emph{non-blocking} systems, i.e., $\forall x\in \set{X}: \set{U}(x)\ne \myemptyset$.

The use of a set-valued map to describe the transition map of a system allows to model perturbations and any kind of non-determinism in a common formalism.
We say that a transition control system is \emph{deterministic} if for every state $\vect x \in \set{X}$ and control input $\vect u \in \set{U}$, $F(x,u)$ is either empty or a singleton. Otherwise, we say that it is \emph{non-deterministic}. %
A \emph{finite-state} system, in contrast to an \emph{infinite-state} system, refers to a system characterized by finitely many states and inputs.

A tuple $(\seq x, \seq u)\in \set{X}^{[0; T[}\times \set{U}^{[0; T-1[}$ is a \emph{trajectory} of length $T$ of the system $\Sys=(\set{X},\set{U}, F)$ starting at $x(0)$ if $T\in\N\cup \{\infty\}$, $x(0)\in\set{X}$, $\forall k\in [0;T-1[: u(k)\in \set{U}(x(k))$ and $x(k+1) \in F(x(k), u(k))$.
The set of trajectories of $\Sys$ is called the \emph{behavior} of $\Sys$, denoted $\set{B}(\Sys)$.

\begin{definition}\label{def:behavior}
Let $\Sys=(\set{X}, \set{U}, F)$. The set $\set{B}(\Sys)=\{ (\seq x,\seq u) \mid \exists T \in \N\cup\{\infty\}\text{ such that } (\seq x,\seq u)\text{ is a trajectory of }\Sys \text{ of length } T\},$
is called the \emph{behavior} of $\Sys$.
We define the \emph{state behavior} $\set{B}_x(\Sys)$ as $\seq x\in \set{B}_x(\Sys)\Leftrightarrow \exists \seq u: (\seq x,\seq u)\in \set{B}(\Sys)$.
\end{definition}

It is a common practice in the abstraction-based framework to represent systems with an extra set $\set{Y}$ for outputs, and an output mapping function $H:\set{X}\rightarrow \set{Y}$. In that case, the states in $\set{X}$ are seen as internal to the system, while the outputs are externally visible. However, as this is not essential to our current discussion, we exclude it here.

We consider systems without \emph{internal variables}, see \cite[Definition~III.1]{reissig2016feedback} and the discussion that follows this definition. This extension is necessary to correctly define the composition of a system with a dynamic controller. For the sake of clarity and to avoid the use of internal variables, we only consider static controllers.

\begin{definition}
 We define a \emph{static controller} for a system $\Sys = (\set{X},\set{U}, F)$ as a set-valued map $\Cont :\set{X}\rightarrow 2^\set{U}$ such that $\forall \vect x\in\set{X}: \ \Cont(\vect x) \subseteq \set{U}(x)$, $\Cont(x) \ne \myemptyset$. We define the \emph{controlled system}, denoted as $\Cont\times \Sys$, as the transition system characterized by the tuple $(\set{X},\set{U}, F_{\Cont})$ where
 $x'\in F_{\Cont}(x, u) \Leftrightarrow (u\in \Cont(x)\land x'\in F(x,u))$.
\end{definition}
This controller is \emph{static} since the set of control inputs enabled at a given state only depend on the state. A more general notion of controller (called a \emph{dynamic} controller)
would take the entire past trajectory and/or other variables as input. For the sake of clarity we limit ourselves to static controllers although most definitions and proofs extend easily to dynamical controllers.

We now define the control problem.
\begin{definition}\label{def:specification}
Consider a system $\Sys = (\set{X},\set{U}, F)$. A \emph{specification} $\Sigma$ for $\Sys$ is defined as any subset $\Sigma \subseteq (\set{X}\times \set{U})^{\infty}$. It is said that system $\Sys$ \emph{satisfies} the specification $\Sigma$ if $\set{B}(\Sys)\subseteq \Sigma$. 
A system $\Sys$ together with a specification $\Sigma$ constitute a \emph{control problem} $(\Sys,\Sigma)$.
Additionally, a controller $\Cont$ is said to \emph{solve} the control problem $(\Sys,\Sigma)$ if $\Cont\times \Sys$ satisfies the specification $\Sigma$.
\end{definition}

Given $\set{X}_I, \set{X}_T, \set{X}_O \subseteq \set{X}$, we define the specific \emph{reach-avoid} specification that we will use to motivate new relations
\begin{align}
\Sigma^{\text{Reach}} = \{ &(\seq x,\seq u) \in (\set{X} \times \set{U})^\infty \mid x(0) \in \set{X}_I \Rightarrow \nonumber \\
&\exists k \in \Z_+ : \left(x(k) \in \set{X}_T \land \forall k' \in [0;k):x(k')\notin \set{X}_O\right) \}, \label{eq:reach-avoid-spec}
\end{align}
which enforces that all states in the initial set $\set{X}_I$ will reach the target $\set{X}_T$ in finite time while avoiding obstacles in $\set{X}_O$. We use the abbreviated notation $\Sigma^{\text{Reach}}=[\set{X}_I,\set{X}_T,\set{X}_O]$ to denote the specification~\eqref{eq:reach-avoid-spec}.  

\section{\RelOne}\label{Sec:ASR}

In this paper, we will refer to $\Sys_1=(\set{X}_1, \set{U}_1,F_1)$ as the concrete system and $\Sys_2=(\set{X}_2, \set{U}_2, F_2)$ as its abstraction.

In practice, the abstract domain $\set{X}_2$ of $\Sys_2$ is constructed by discretizing the concrete state space $\set{X}_1$ of $\Sys_1$ into subsets (called \emph{cells}). The discretization is induced by the relation $R\subseteq \set{X}_1\times \set{X}_2$, i.e., the cell associated with the abstract state $x_2\in\set{X}_2$ is $R^{-1}(x_2)\subseteq \set{X}_1$.
Note that in this context, we refer to the set-valued map $R(x_1) = \{x_2 \mid (x_1, x_2)\in R\}$ as the \emph{quantizer}.
When $R$ is a single-valued map, we refer to it as defining a \emph{partition} of $\set{X}_1$, in contrast to the case of set-valued maps where we say that it defines a \emph{cover} of $\set{X}_1$, see~\Cref{fig:discretization} for clarity.
Notably, the condition that $R$ is a strict relation is equivalent to ensuring that the discretization completely covers $\set{X}_1$.

\begin{figure}
    \centering
    \includegraphics[width=0.42\textwidth]{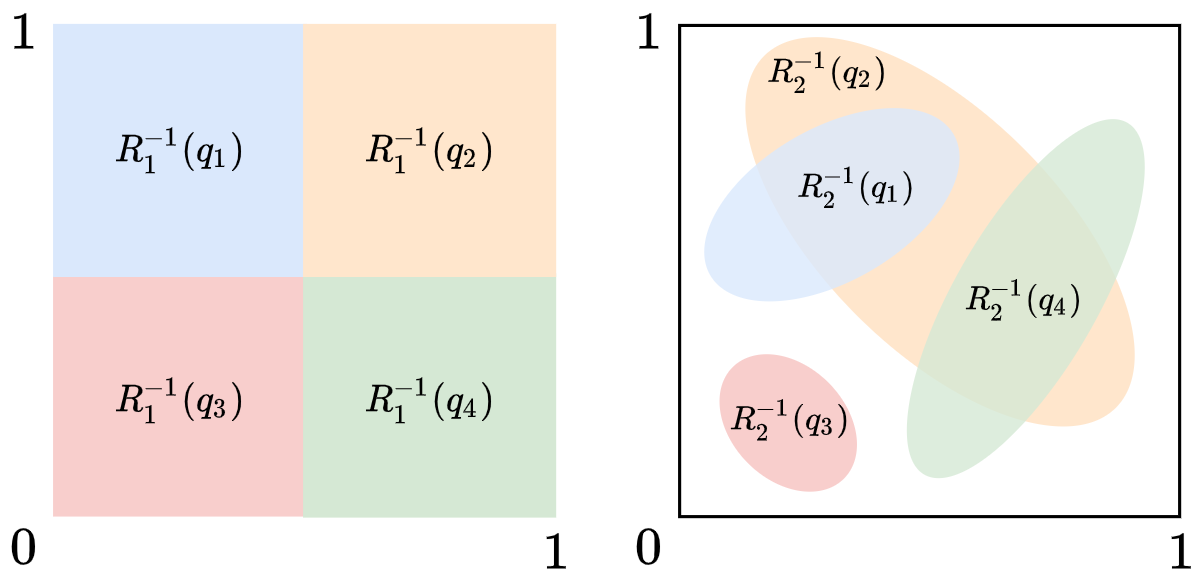}
    \caption{Types of discretization of the concrete state space.
    Let $\Sys_{1} = (\set{X}_1,\set{U}_1,F_1)$ with $\set{X}_1 = [0,1]^2$, $\Sys_{2} = (\set{X}_2,\set{U}_2,F_2)$ with $\set{X}_2 = \{q_1,q_2,q_3,q_4\}$, $R_1 \subseteq \set{X}_1\times \set{X}_2$ and $R_2 \subseteq \set{X}_1\times \set{X}_2$ are explicit from the figure.
    Left: $R_1$ is a strict single-valued map, i.e., it induces a full partition of $\set{X}_1$. Right: $R_2$ is a non-strict set-valued map, i.e., it induces a partial cover of $\set{X}_1$.
    }
    \label{fig:discretization}
\end{figure}

As mentioned in the introduction, the concrete specification $\Sigma_1$ must also be translated into an abstract specification $\Sigma_2$.
For example, given $\Sigma_1^{\text{Reach}} = [\set{X}_{I},\set{X}_{T},\set{X}_{O}]$ with $\set{X}_{I},\set{X}_{T}, \set{X}_{O}\subseteq \set{X}_1$, the abstract specification $\Sigma_2^{\text{Reach}} = [\set{Q}_{I},\set{Q}_{T},\set{Q}_{O}]$ with $\set{Q}_{I},\set{Q}_{T}, \set{Q}_{O}\subseteq \set{X}_2$ must satisfy the following conditions
$R(\set{X}_{I})\subseteq \set{Q}_{I}$, $\set{Q}_{T}\subseteq R(\set{X}_{T})$ and $R(\set{X}_{O})\subseteq \set{Q}_{O}$.

Given an abstract controller $\Cont_2$ that solves the control problem $(\Sys_2,\Sigma_2)$, to guarantee the existence of a controller $\Cont_1$ that solves the concrete problem $(\Sys_1,\Sigma_1)$, the tuple 
\modif{$(\Sys_1,\Sys_2, R)$} 
must satisfy the following \emph{\propOne}, which guarantees that the behavior of the concrete controlled system $\Cont_1 \times \Sys_1$ is simulated by the abstract controlled system $\Cont_2 \times \Sys_2$ via the relation~$R$. 


\begin{property}[\PropOne-{\cite[(5)]{reissig2016feedback}}]\label{prop:reproductibility}
\modif{Given two systems $\Sys_1$ and $\Sys_2$, 
and a relation $R\subseteq \set{X}_1\times \set{X}_2$, we say that the tuple 
$(\Sys_1, \Sys_2, R)$ 
satisfies the \emph{\propOne} if for every controller $\Cont_2$ there exists a (possibly non-static) controller $\Cont_1$ such that 
for every trajectory $\seq x_1$ of length $T$ of the controlled system $\Cont_1\times \Sys_1$, there exists a trajectory $\seq x_2$ of length $T$ of the controlled system $\Cont_2\times \Sys_2$ satisfying 
\begin{equation}\label{eq:controlled_simulability}
\forall k\in[0;T-1]:\ (\vect x_1(k),\vect x_2(k))\in R.
\end{equation}
The condition~\eqref{eq:controlled_simulability} can be equivalently reformulated as follows
\begin{equation}\label{eq:propOneSet}
\set{B}_x(\Cont_1\times \Sys_1)\subseteq R^{-1}(\set{B}_x(\Cont_2\times \Sys_2)).
\end{equation}}
\end{property}

Controlled simulability can be used to guarantee that properties satisfied by the abstract controlled system $\Cont_2\times \Sys_2$ are also satisfied by the concrete controlled system $\Cont_1\times \Sys_1$.
To design $\Cont_1$ from $\Cont_2$ in~\Cref{prop:reproductibility}, 
the relation must impose conditions on the local dynamics of the systems in the associated states, accounting for the impact of different input choices on state transitions. The \emph{\relOne}~\cite[Definition 4.19]{tabuada2009verification} is a comprehensive definition of such a relation. It guarantees that any control applied to $\Sys_2$ can be concretized into a controller for $\Sys_1$ that maintains the relation between the controlled trajectories.

\begin{figure}
   \centering
   \includegraphics[width=0.48\textwidth]{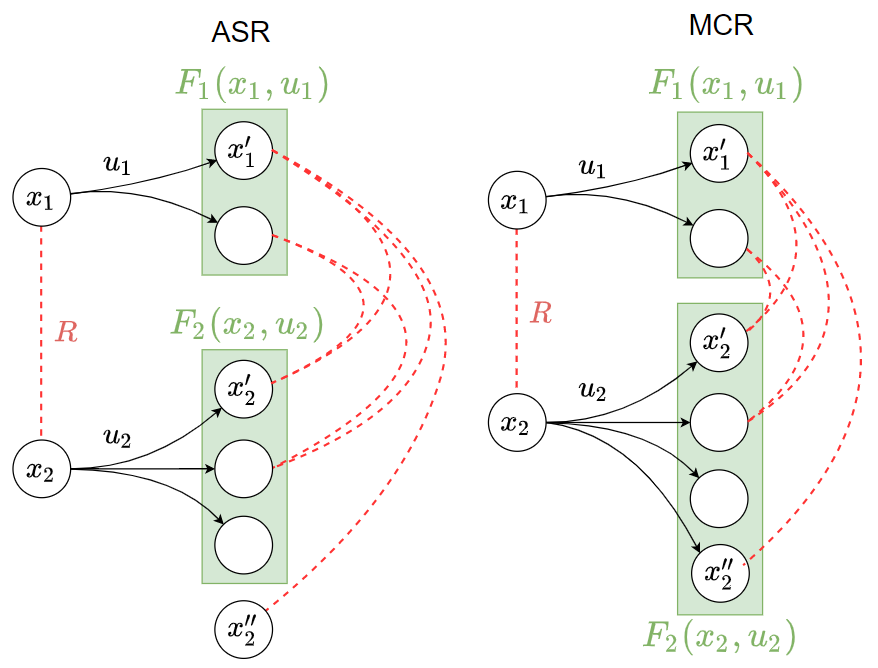}
   \label{fig:blabla}
  \caption{Local conditions between related states to satisfy the relation. Note that here, the relation is fixed (i.e., here a cover of the state space since $|R(x_1')|>1$) as well as the transition map of the concrete system and that we are simply changing the transition map of the abstract system. 
  Left: The relation $R$ is a $\relOneAbr$  but not a $\relTwoAbr$. 
  Right: The relation $R$ is a~$\relTwoAbr$.
  }
  \label{fig:local-condition}
\end{figure}


\begin{definition}[$\relOneAbr$]\label{def:R1:ASR}
 A relation $R \subseteq \set{X}_1\times \set{X}_2$ is an \emph{\relOne} from $\Sys_1$ to $\Sys_2$ if 
 for every $(\vect x_{1},\vect x_{2}) \in R$
       and for every $\vect u_2\in \set{U}_2(x_2)$ there exists $\vect u_1\in \set{U}_1(x_1)$ such that for every $\vect x_1'\in F_1(x_1,u_1)$ there exists $\vect x_2'\in F_2(x_2,u_2)$ such that $(\vect x_1',\vect x_2')\in R$.
\end{definition}
Note that sometimes one prefers to speak of the \emph{extended relation} $R_e\subseteq \set{X}_1\times \set{X}_2\times \set{U}_1\times \set{U}_2$, which is defined by the set of $(\vect x_1, \vect x_2, \vect u_1, \vect u_2)$ satisfying the condition given in the definition of~$R$. When it refers to a $\relOneAbr$, we denote it~$R_e^{\relOneAbr}$.

The local condition stated in~\Cref{def:R1:ASR} is illustrated for some $(x_1,x_2,u_1,u_2)\in R_e^{\relOneAbr}$ in Figure~\ref{fig:local-condition} (left).
The fact that $R$ is an \relOne{} from $\Sys_1$ to $\Sys_2$ will be denoted $\Sys_1 \preceq_R^{\relOneAbr} \Sys_2$, and we write $\Sys_1 \preceq^{\relOneAbr} \Sys_2$ if $\Sys_1 \preceq_R^{\relOneAbr} \Sys_2$ holds for some $R$.

Note that this definition slightly differs from~\cite[Definition 4.19]{tabuada2009verification}, where there is an additional requirement regarding the outputs of related states. This requirement is useful for the concept of approximate \relOne{} \cite[Definition 9.6]{tabuada2009verification} which we do not discuss here.

One needs an \emph{interface} to map abstract inputs to concrete ones.
\begin{definition}[Interface]
Given two systems $\Sys_1$ and $\Sys_2$, a relation $R$ \modif{of type $\T$, i.e. $\Sys_1\preceq_R^{\T}\Sys_2$}, and its associated extended relation $\modif{R_e^{\T}}$, 
a map $\modif{I_R^{\T}}:\set{X}_1\times \set{X}_2\times \set{U}_2 \rightarrow 2^{\set{U}_1}$ is an \emph{interface} from $\Sys_2$ to $\Sys_1$ if:\\
$\forall (x_1,x_2)\in R, \ \forall u_2\in\set{U}_2(x_2)$, 
\begin{align}
\modif{I_R^{\T}}(x_1, x_2, u_2)&\ne \myemptyset \label{eq:interface:non-empty}\\
\modif{I_R^{\T}}(x_1, x_2, u_2) &\subseteq \{u_1 \mid (x_1,x_2,u_1,u_2)\in \modif{R_e^{\T}}\}. \label{eq:interface:subset}
\end{align}
\end{definition}
The \emph{maximal interface} associated to $R$ satisfies $\modif{I_R^{\T}}(x_1, x_2, u_2) = \{u_1 \mid (x_1,x_2,u_1,u_2)\in \modif{R_e^{\T}}\}$.

\begin{theorem}\label{th:ASR-reproductibility}
Let two systems $\Sys_1$ and $\Sys_2$, and a relation $R$ such that $\Sys_1\preceq_R^{\relOneAbr} \Sys_2$, an interface $\modif{I_R^{\relOneAbr}}$, and any trajectories $(\seq x_1, \seq u_1)$ and $(\seq x_2, \seq u_2)$ of length $T$ where $T\in\N\cup \{\infty\}$ such that $(x_1(0), x_2(0))\in R$ and
    \begin{align*}
        u_2(k)&\in \set{U}_2(x_2(k)) & k\in [0;T-1[;\\
        u_1(k)&\in \modif{I_R^{\relOneAbr}}(x_1(k), x_2(k), u_2(k))& k\in [0;T-1[;\\
        x_1(k+1)&\in F_1(x_1(k), u_1(k))& k\in [0;T-1[;\\
        x_2(k+1)&\in F_2(x_2(k), u_2(k)) \cap R(x_1(k+1))& k\in [0;T-1[.
    \end{align*}
Then, the following holds
\begin{align*}
    (\seq x_1, \seq u_1)& \in \set{B}(\Sys_1),\\
    (\seq x_2, \seq u_2)& \in \set{B}(\Sys_2),\\
    \forall k\in [0;T-1]:& \ (x_1(k), x_2(k))\in R.
\end{align*}
\end{theorem}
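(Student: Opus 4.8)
The plan is to prove the three conclusions by a single induction on the time index $k$, handling the two behavior claims and the relation-maintenance claim together, since they are coupled through the hypotheses. The whole argument is essentially a careful unpacking of the interface condition~\eqref{eq:interface:subset} together with the definition of $R_e^{\relOneAbr}$ supplied by~\Cref{def:R1:ASR}; there is no genuinely hard estimate, only bookkeeping to confirm that the pointwise hypotheses assemble into the two behaviors and the global relation.

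First I would record the base case: the hypothesis $(x_1(0), x_2(0)) \in R$ establishes the relation-maintenance claim at $k=0$, which is the only instance not produced by a transition hypothesis. For the step part I would fix an arbitrary $k \in [0;T-1[$ and discharge the local obligations needed to extend both behaviors by one step and to re-establish the relation at $k+1$. The key move is to exploit $u_1(k) \in I_R^{\relOneAbr}(x_1(k), x_2(k), u_2(k))$: by~\eqref{eq:interface:subset} this forces $(x_1(k), x_2(k), u_1(k), u_2(k)) \in R_e^{\relOneAbr}$, and by the very definition of the extended relation (the tuples satisfying the condition of~\Cref{def:R1:ASR}) this membership carries with it $u_1(k) \in \set{U}_1(x_1(k))$ and $u_2(k) \in \set{U}_2(x_2(k))$.

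Combined with the transition hypothesis $x_1(k+1) \in F_1(x_1(k), u_1(k))$, this verifies that $\seq x_1$ meets the availability and transition requirements of a trajectory of $\Sys_1$, hence $(\seq x_1, \seq u_1) \in \set{B}(\Sys_1)$. For $\Sys_2$ the availability $u_2(k) \in \set{U}_2(x_2(k))$ is granted directly, while the transition requirement follows from the inclusion $x_2(k+1) \in F_2(x_2(k), u_2(k)) \cap R(x_1(k+1)) \subseteq F_2(x_2(k), u_2(k))$; thus $(\seq x_2, \seq u_2) \in \set{B}(\Sys_2)$. Finally, the same intersection hypothesis yields $x_2(k+1) \in R(x_1(k+1))$, which under the identification of $R$ with its set-valued map is precisely $(x_1(k+1), x_2(k+1)) \in R$; together with the base case this gives the relation at every $k \in [0;T-1]$.

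The main subtlety, rather than an obstacle, is recognizing that the hypotheses are already self-consistent, so nothing needs to be \emph{constructed}: the theorem takes the sequences as given and merely asserts that they are legitimate behaviors respecting $R$. It is nonetheless worth remarking that such sequences can exist, i.e.\ that the fifth hypothesis is satisfiable: since $(x_1(k), x_2(k), u_1(k), u_2(k)) \in R_e^{\relOneAbr}$, \Cref{def:R1:ASR} guarantees that for the chosen $x_1(k+1) \in F_1(x_1(k), u_1(k))$ there is some $x_2' \in F_2(x_2(k), u_2(k))$ with $(x_1(k+1), x_2') \in R$, so the intersection $F_2(x_2(k), u_2(k)) \cap R(x_1(k+1))$ is non-empty. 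This is the exact point where the alternating-simulation structure does its work; everywhere else the proof is direct verification. Care must also be taken with the index ranges, so that the relation claim over $[0;T-1]$ is assembled from the base case at $0$ together with the memberships $x_2(k+1) \in R(x_1(k+1))$ ranging over $k \in [0;T-1[$, with the $T=\infty$ case handled uniformly by letting $k$ range over the appropriate half-open interval.
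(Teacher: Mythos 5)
Your proof is correct and follows essentially the same route as the paper's: induction on $k$, using the interface conditions \eqref{eq:interface:non-empty}--\eqref{eq:interface:subset} and the definition of $R_e^{\relOneAbr}$ to obtain input availability, reading off the relation at $k+1$ from the intersection hypothesis, and noting that the alternating-simulation condition guarantees $F_2(x_2(k),u_2(k))\cap R(x_1(k+1))\ne\myemptyset$. Your write-up is in fact more explicit than the paper's rather terse argument, but there is no substantive difference.
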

\begin{proof}
We proceed by induction on $k$.
Given that $(x_1(k),x_2(k))\in R$, which is true for $k=0$.
Since $\Sys_1\preceq_R^{\relOneAbr}\Sys_2$, and according to~\eqref{eq:interface:non-empty} and~\eqref{eq:interface:subset}, for every $u_2(k)\in \set{U}_2(k)$, there exists $u_1(k)\in \modif{I_R^{\relOneAbr}}(x_1(k),x_2(k),u_2(k))\subseteq \set{U}_1(x_1(k))$.
Furthermore, since $\Sys_1\preceq_R^{\relOneAbr}\Sys_2$, we can conclude that $F_2(x_2(k), u_2(k)) \cap R(x_1(k+1))\ne \myemptyset$, ensuring that $(x_1(k+1), x_2(k+1))\in R$.
\end{proof}

\begin{theorem}\label{th:ASR:sufficient}
    \modif{Given two systems $\Sys_1$ and $\Sys_2$, if $\Sys_1 \preceq_R^{\relOneAbr} \Sys_2$, 
    then $(\Sys_1, \Sys_2, R)$ satisfies the \propOne{} (\Cref{prop:reproductibility}).}
\end{theorem}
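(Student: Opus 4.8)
The plan is to turn any abstract controller $\Cont_2$ into a (dynamic) concrete controller $\Cont_1$, and then to read the conclusion off from \Cref{th:ASR-reproductibility}. The key device is to let $\Cont_1$ carry, as internal memory, a \emph{shadow} abstract state that it keeps consistent with the concrete state it observes. First I would fix the maximal interface $I_R^{\relOneAbr}$ associated with $R$. By \Cref{def:R1:ASR}, for every $(x_1,x_2)\in R$ and every $u_2\in\set{U}_2(x_2)$ the set $\{u_1\mid (x_1,x_2,u_1,u_2)\in R_e^{\relOneAbr}\}$ is non-empty, so this interface satisfies both requirements~\eqref{eq:interface:non-empty} and~\eqref{eq:interface:subset}; this is what makes the selections below well-defined.

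Next I would describe the run of $\Cont_1$ along a concrete trajectory. At initialization, given $x_1(0)$ I pick any shadow state $x_2(0)\in R(x_1(0))$ (non-empty whenever $R$ is strict, i.e. whenever the cover is total, which I assume here so that $\Cont_1$ is non-blocking on all of $\set{X}_1$). At step $k$, having observed $x_1(k)$ and stored $x_2(k)$ with $(x_1(k),x_2(k))\in R$, the controller selects any abstract input $u_2(k)\in\Cont_2(x_2(k))$ and enables the concrete inputs $u_1(k)\in I_R^{\relOneAbr}(x_1(k),x_2(k),u_2(k))$, a set that is non-empty by the previous paragraph. Once the plant realizes some $x_1(k+1)\in F_1(x_1(k),u_1(k))$, the memory is updated to any $x_2(k+1)\in F_2(x_2(k),u_2(k))\cap R(x_1(k+1))$. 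The crucial point is that this intersection is non-empty: it is exactly the local guarantee encoded in the alternating simulation relation, so the shadow state can always be advanced while remaining related to the realized concrete state.

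Finally I would check the two requirements of \Cref{prop:reproductibility}. By construction every trajectory $\seq x_1$ of $\Cont_1\times\Sys_1$ comes equipped with a memory sequence $\seq x_2$ satisfying precisely the four hypotheses of \Cref{th:ASR-reproductibility}: abstract inputs are available (since $\Cont_2(x_2(k))\subseteq\set{U}_2(x_2(k))$), concrete inputs lie in the interface, the concrete transitions are legal, and the shadow update lies in $F_2\cap R$. That theorem then yields $(\seq x_1,\seq u_1)\in\set{B}(\Sys_1)$, $(\seq x_2,\seq u_2)\in\set{B}(\Sys_2)$, and $(x_1(k),x_2(k))\in R$ for all $k$. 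Since moreover $u_2(k)\in\Cont_2(x_2(k))$ and $x_2(k+1)\in F_2(x_2(k),u_2(k))$ by design, $\seq x_2$ is in fact a trajectory of the controlled system $\Cont_2\times\Sys_2$, which establishes~\eqref{eq:controlled_simulability} and hence the inclusion~\eqref{eq:propOneSet}.

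I expect the main obstacle to be one of formalization rather than mathematical depth: since the paper develops only static controllers in detail, the delicate step is to make precise the dynamic controller $\Cont_1$ and its memory update, and to argue that \emph{every} trajectory of $\Cont_1\times\Sys_1$ genuinely arises from a legitimate run of this memory, so that the quantifier ``for every trajectory of $\Cont_1\times\Sys_1$ there exists a related abstract trajectory'' is honestly met. A secondary subtlety worth flagging is the initialization/domain issue: strictness of $R$ (a total cover) is what guarantees a shadow state exists at $k=0$ for every concrete initial state, and hence that $\Cont_1$ is well-defined and non-blocking everywhere.
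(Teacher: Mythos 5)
Your proposal is correct and follows essentially the same route as the paper: the paper's proof likewise defines $\Cont_1$ as the dynamic controller implementing the algorithm of \Cref{th:ASR-reproductibility} with $u_2(k)\in\Cont_2(x_2(k))\subseteq\set{U}_2(x_2(k))$, and reads off the conclusion from that theorem. Your write-up simply makes explicit what the paper leaves implicit (the shadow-state memory, the non-emptiness of $F_2(x_2(k),u_2(k))\cap R(x_1(k+1))$ via the $\relOneAbr$ condition, and the need for strictness of $R$ at initialization), which are accurate and worthwhile clarifications rather than a different argument.
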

\begin{proof}
    \modif{
    Given a controller $\Cont_2$, we can define a controller $\Cont_1$ implementing the algorithm given in~\Cref{th:ASR-reproductibility} by taking $u_2(k)\in \Cont_2(x_2(k))\subseteq \set{U}_2(x_2(k))$, which ensure that $(\Sys_1, \Sys_2, R)$ satisfies the \propOne{}. 
    }
\end{proof}

\begin{figure}
    \centering
    \includegraphics[width=0.49\textwidth]{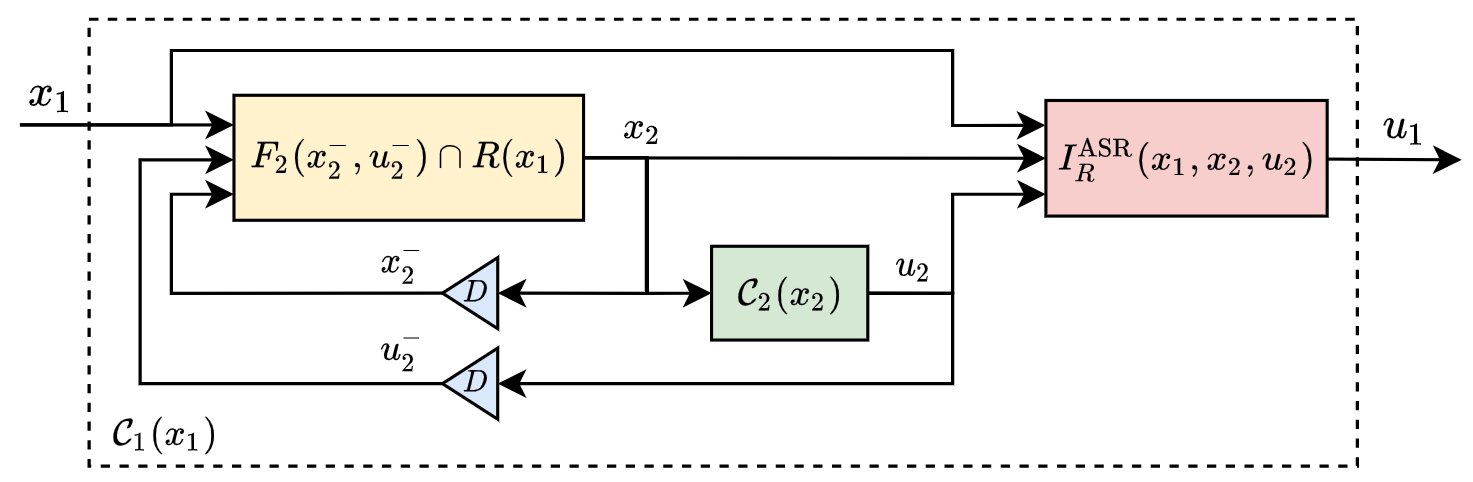}
    \caption{Dynamic controller architecture $\Cont_1$ such that $(\Sys_1,\Sys_2,\Cont_1,\Cont_2,R)$ satisfies the \propOne{} if $\Sys_1\preceq_R^{\relOneAbr} \Sys_2$.
    The red block is the \emph{interface} $\modif{I_R^{\relOneAbr}}$.
    The yellow block contains the \emph{abstract system} and the \emph{quantizer}.
    The green block is the \emph{abstract controller}.
    The blue blocks are \emph{delay blocks} which represent memory elements that store and provide a past value of a signal, e.g., $D(x_2(k)) = x_2(k-1)$.}
    \label{fig:controller-architecture-ASR}
\end{figure}

From~\Cref{th:ASR-reproductibility}, we can construct a concrete controller $\Cont_1$, whose block diagram is given in~\Cref{fig:controller-architecture-ASR}, guaranteeing the \propOne{}. 
This concrete controller simulates the abstraction at every time step which can be computationally expensive if $F_2$ is hard to compute, e.g., if $F_1$ itself is hard to compute (or approximate).
This is what we refer to as the \emph{concretization complexity issue}.  
In addition, the proposed controller architecture $\Cont_1$ requires memory to account for last past abstract input and state, as illustrated on the block diagram in~\Cref{fig:controller-architecture-ASR}.


Note that $\Sys_1 \preceq^{\relOneAbr} \Sys_2$ guarantees that \modif{$(\Sys_1,\Sys_2,R)$ satisfies \propOne{}.}
Nevertheless, this does not guarantee that any arbitrary property of $\Cont_2$ will be translated in $\Cont_1$. In particular the property of a controller being static is certainly not preserved. This will be illustrated in the next section. Another downside is that the controller given here requires simulating the abstract system at every step to know which inputs to allow in the concrete case.

\modif{
\Cref{th:ASR-reproductibility} does not appear formally in~\cite{tabuada2009verification},
but it is suggested in the discussion that follows~
\cite[Def 6.1-Feedback composition]{tabuada2009verification}.
The content of~\Cref{th:ASR:sufficient} was mostly implied in the discussion around
\cite[Prop 8.7]{tabuada2009verification}.
In this light, \Cref{Sec:ASR} gives the necessary background, in standardized form, for the introduction of the~\relTwo{}.
}

\section{\RelTwo}\label{Sec:WFRR}
We start by defining the concrete controller resulting from a specific concretization scheme.
\begin{definition}[\ControllerTwo]
Given two systems $\Sys_1$ and $\Sys_2$, a strict relation $R$ \modif{of type $\T$}, an interface $\modif{I_R^{\T}}$ and a controller $\Cont_2$, we define the \emph{\controllerTwo{}}, denoted $\Cont_1 = \Cont_2\circ_{\modif{I_R^{\T}}} R $ as the mapping 
\begin{equation}\label{eq:local-refined-controller}
    \Cont_1(x_1) = (\Cont_2\circ_{\modif{I_R^{\T}}} R)(x_1) =\bigcup_{x_2\in R(x_1)} \modif{I_R^{\T}}(x_1,x_2,\Cont_2(x_2)).
\end{equation}
\end{definition}
The term \emph{memoryless} assigned to $\Cont_1$ is justified by the observation that $\Cont_1$ exclusively makes decisions based on the current concrete state. 


\subsection{Motivation}

The example below illustrates that the \relOne{} guarantees the ability to derive a controller $\Cont_1$ for $\Sys_1$ \modif{from any controller $\Cont_2$ of $\Sys_2$, i.e., $(\Sys_1,\Sys_2, R)$ satisfies the \propOne{}.} However, it also points out that the specific associated \controllerTwo{} $\modif{\Cont_1 = \Cont_2\circ_{I_R^{\relOneAbr}} R}$ does not \modif{guarantee~\eqref{eq:propOneSet}}.

\begin{figure}
    \centering
    \includegraphics[width=0.49\textwidth]{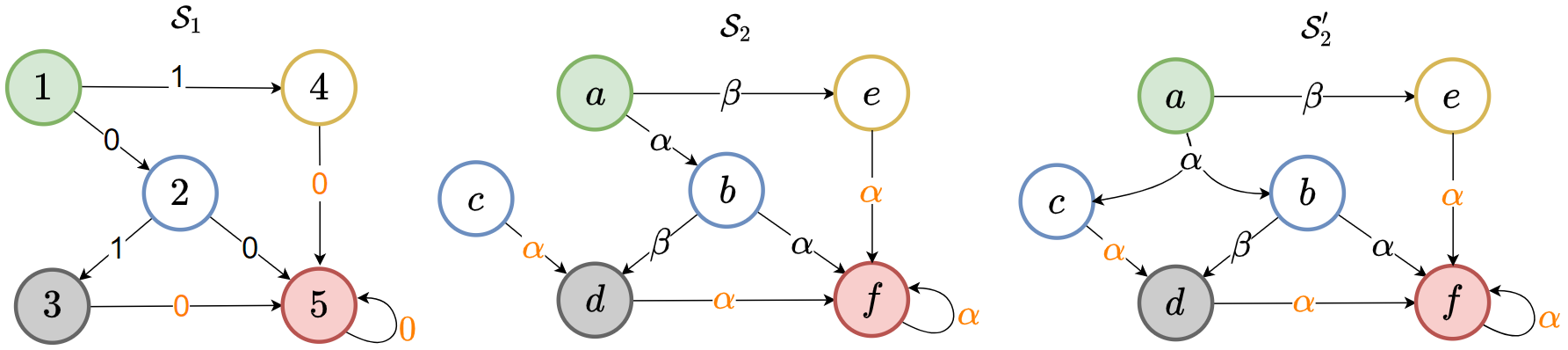}
    \caption{
    Three transition systems $\Sys_1= (\set{X}_1, \set{U}_1 , F_1)$, $\Sys_2= (\set{X}_2, \set{U}_2 , F_2)$ and $\Sys_{2}'= (\set{X}_2, \set{U}_2, F_{2}')$ and a relation $R\subseteq \set{X}_1\times\set{X}_2$. Specifically, $\set{X}_1 = (1,2,3,4,5)$, $\set{U}_1 = \{0,1\}$, $\set{X}_2 = (a,b,c,d,e,f)$, $\set{U}_2 = \{\alpha, \beta, \gamma\}$, the transition maps $F_1$, $F_2$ and $F_{2}'$ and the available inputs maps $\set{U}_1(.)$, $\set{U}_{2}(.)$ and $\set{U}_{2}'(.)$ are clear from the illustration, and $R = \{(1,a), (2,b),(2,c),(3,d),(4,e),(5,f)\}$. The colors of the node indicate the related states. Inputs labeled orange simply indicate that their values are not relevant to the current discussion. We consider the concrete specification $\Sigma_1^{\text{Reach}} =[\{1\}, \{5\}, \{3\}]$ and the associated abstract specification $\Sigma_2^{\text{Reach}} = \Sigma_{2}^{\text{Reach}'} =[\{a\}, \{f\}, \{d\}]$, where the initial, target and obstacle states are respectively in green, red and black. 
    }
    \label{fig:WFRR-motivation-1}
\end{figure}

\begin{figure}
    \centering
    \includegraphics[width=0.45\textwidth]{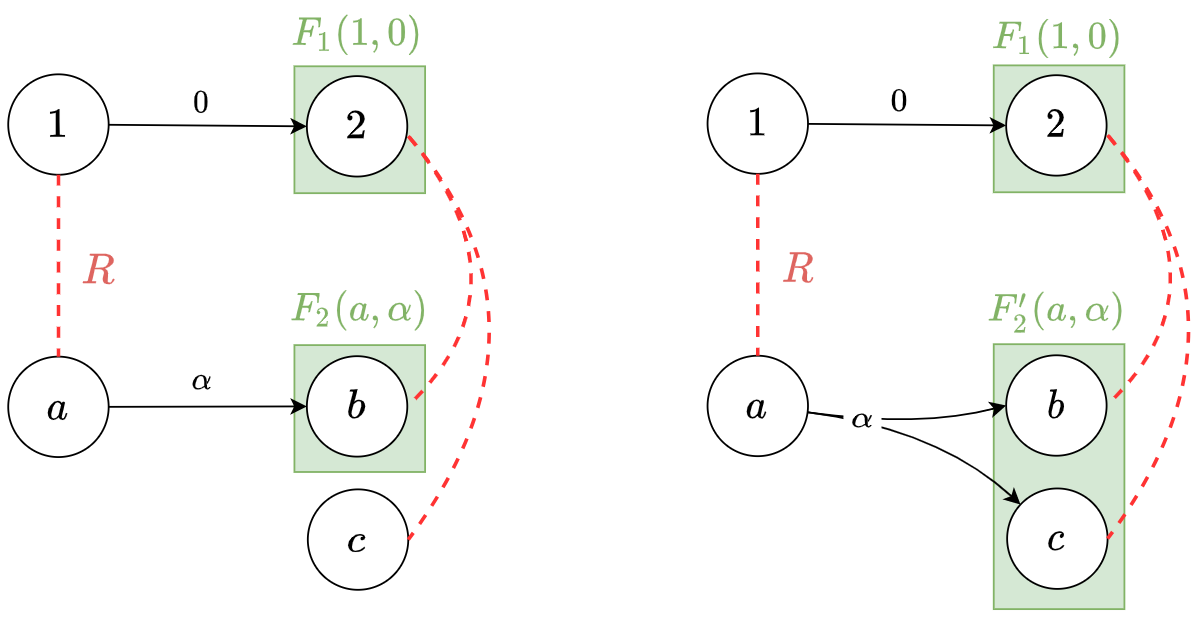}
    \caption{Local transition of two related states $(1,a)\in R$ and relation $R$ defined in~\Cref{fig:WFRR-motivation-1}. Left: Transition for $\Sys_2$. Right: Transitions for $\Sys_2'$.}
    \label{fig:WFRR-motivation-2}
\end{figure}

We consider the two deterministic systems $\Sys_1$ and $\Sys_2$ given in~\Cref{fig:WFRR-motivation-1}.
One can verify from
~\Cref{def:R1:ASR} that the relation $R$ is an \relOne, i.e., $\Sys_1 \preceq_{R}^{\relOneAbr} \Sys_2$. 
Note that this relation corresponds to a cover-based abstraction, since $R(2) = \{b,c\}$ is not a singleton.
We consider the concrete specifications $\Sigma_1$ consisting in controlling $\Sys_1$ from the initial state $1$ to the target state $5$ while avoiding the obstacle state $3$.
The associated abstract specifications $\Sigma_2$ consist in controlling $\Sys_2$
 from the initial state $a$ to the target state $f$ while avoiding the obstacle state~$d$. The controller $\Cont_2$ satisfying
\begin{equation}\label{eq:controller-1}
\Cont_2(a) = \{\alpha\}, \ \Cont_2(b)= \{\alpha\}
\end{equation}
solves the abstract problem.
The maximal interface associated to $R$ is given by
$$\modif{I_R^{\relOneAbr}}(1, a, \alpha) = \{0\}, \ \modif{I_R^{\relOneAbr}}(2, b,\alpha) = \{0\}, \ \modif{I_R^{\relOneAbr}}(2, c,\alpha) = \{1\}.$$
We consider the associated \controllerTwo{}, $\Cont_1 = \Cont_2 \circ_{\modif{I_R^{\relOneAbr}}} R$. We compute the relevant values:
$$\Cont_1(1) = \{0\}, \ \Cont_1(2) = \{0, 1\}.$$
We can construct the trajectory $(\seq x_1,\seq u_1)\in\set{B}(\Cont_1 \times \Sys_1)$ with $\seq x_1=(1,2,3)$ and $\seq u_1= (0,1)$ which leads to a contradiction with the dynamics of $\Cont_2\times \Sys_2$ since the \propOne{} requires the existence of an abstract trajectory $(\seq x_2,\seq u_2)$ such that $x_2(0) = a$ and $x_2(2) = d$. 
\modif{
This shows that $\Cont_1$ does not satisfy~\eqref{eq:propOneSet}.
}
As a result, this specific controller does not offer the formal guarantee of avoiding the obstacle and reaching the target. 

\modif{However, the controller $\Cont_1'$ defined as
$$\Cont'_1(1) = \{0\}, \ \Cont'_1(2) = \{0\}$$
satisfies~\eqref{eq:propOneSet}
(note that its existence was guaranteed by~\Cref{th:ASR:sufficient}, but that there was no guarantee that it was static).}
Indeed, the trajectory $(\seq x_1',\seq u_1')\in \set{B}(\Cont'_1\times\Sys_1)$ with $\seq x_1' = (1,2,5)$ and $\seq u_1'=(0,0)$ aligns with a corresponding abstract trajectory $(\seq x_2', \seq u_2')\in \set{B}(\Cont_2\times \Sys_2)$ with $\seq x_2'=(a,b,f)$ and $\seq u_2'=(\alpha,\alpha)$, while the previously defined trajectory $(\seq x_1, \seq u_1)\notin \set{B}(\Cont'_1\times \Sys_1)$. 

On the other hand, if we adopt an abstract controller defined as follows
\begin{equation}\label{eq:controller-2}
\tilde{\Cont}_2(a) = \{\beta\}, \ \tilde{\Cont}_2(e) = \{\alpha\}
\end{equation}
which also solves the abstract problem, then 
\modif{the controller $\tilde{\Cont}_1$, where $\tilde{\Cont}_1 = \tilde{\Cont}_2\circ_{\modif{I_R^{\relOneAbr}}} R$ is the associated \controllerTwo{}, satisfies~\eqref{eq:propOneSet}.}

Nevertheless, we aim to establish conditions that guarantee that the \controllerTwo{} ensures the \propOne{} whatever the abstract controller.

 In the subsequent subsection, we will introduce a relation that is not only \emph{sufficient} to ensure this specific concretization scheme but is also  \emph{necessary} to ensure its feasibility for every abstract controller.


\subsection{Definition and properties}

The crucial point in the failure of the previous example is that the $\relOneAbr$ condition only imposes that for each transition from $x_1$ to $x_1'$ in $\Sys_1$ there exists a state $x_2' \in R(x_1')$ that is a successor of $x_2$ in $\Sys_2$, but it is not required that every $x_2''\in R(x_1')$ succeeds $x_2$ (note that it was the case in the previous example as illustrated in~\Cref{fig:WFRR-motivation-2}). %
The relation presented below is introduced to circumvent the specific problem describe previously.
\begin{definition}[$\relTwoAbr$]\label{def:WFRR}
 A relation $R \subseteq \set{X}_1\times \set{X}_2$ is a \emph{\relTwo} from $\Sys_1$ to $\Sys_2$ if 
 for every $(\vect x_{1},\vect x_{2}) \in R$ for every $\vect u_2\in \set{U}_2(x_2)$ there exists $\vect u_1\in\set{U}_1(x_1)$ such that for every $\vect x_1'\in F_1(x_1,u_1)$ for every $\vect x_2'$ such that $(\vect x_1',\vect x_2')\in R$: $ \vect x_2'\in F_2(x_2,u_2)$.
\end{definition}
The local condition stated in~\Cref{def:WFRR} is illustrated for some $(x_1,x_2,u_1,u_2)\in R_e^{\relTwoAbr}$ in Figure~\ref{fig:local-condition} (right).
The fact that $R$ is a \relTwo{} from $\Sys_1$ to $\Sys_2$ will be denoted $\Sys_1 \preceq_R^{\relTwoAbr} \Sys_2$, and we write $\Sys_1 \preceq^{\relTwoAbr} \Sys_2$ if $\Sys_1 \preceq_R^{\relTwoAbr} \Sys_2$ holds for some $R$.

The \relTwo{} is a relaxed version of the \relThree{} introduced in~\cite[V.2 Definition]{reissig2016feedback}. Specifically, we relax the requirement that the concrete and abstract inputs must be identical.

\begin{theorem}\label{prop:WFRR-ASR}
Given two systems, $\Sys_1$ and $\Sys_2$, and a strict relation~$R$, the following statements hold
\begin{enumerate}
    \item[(i)]  If $\Sys_1\preceq_R^{\relTwoAbr} \Sys_2$, then  $\Sys_1\preceq_R^{\relOneAbr} \Sys_2$.
    \item[(ii)]  Additionally, if $R$ is single-valued (i.e., defines a partition), the converse is true. 
\end{enumerate}
\end{theorem}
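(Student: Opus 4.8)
The plan is to first rewrite both local conditions so that their single point of difference becomes visible. Fix $(x_1,x_2)\in R$ and $u_2\in\set{U}_2(x_2)$, and suppose a candidate witness input $u_1\in\set{U}_1(x_1)$ has been chosen. Then the inner clause of the $\relOneAbr$ condition (\Cref{def:R1:ASR}) reads ``for every $x_1'\in F_1(x_1,u_1)$, $R(x_1')\cap F_2(x_2,u_2)\ne\myemptyset$'', whereas the inner clause of the $\relTwoAbr$ condition (\Cref{def:WFRR}) reads ``for every $x_1'\in F_1(x_1,u_1)$, $R(x_1')\subseteq F_2(x_2,u_2)$''. Once phrased this way, the theorem reduces to the elementary observation that for a nonempty set $R(x_1')$ inclusion implies nonempty intersection, and that the two properties coincide when $R(x_1')$ is a singleton.

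For part (i), I would take the input $u_1$ supplied by the $\relTwoAbr$ hypothesis and show that the \emph{same} $u_1$ already witnesses the $\relOneAbr$ condition. Given any $x_1'\in F_1(x_1,u_1)$, strictness of $R$ guarantees $R(x_1')\ne\myemptyset$, so I can select some $x_2'\in R(x_1')$; the $\relTwoAbr$ clause then forces $x_2'\in F_2(x_2,u_2)$, which is precisely the existential successor required by $\relOneAbr$. The only hypothesis doing real work here is strictness: it is what converts the (otherwise possibly vacuous) universally quantified inclusion of $\relTwoAbr$ into an actual element of $R(x_1')\cap F_2(x_2,u_2)$.

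For part (ii), I would argue in the opposite direction with the same reuse-the-witness idea. Assuming $R$ single-valued, take the $u_1$ supplied by the $\relOneAbr$ hypothesis. For any $x_1'\in F_1(x_1,u_1)$, the $\relOneAbr$ condition yields some $x_2'\in F_2(x_2,u_2)$ with $(x_1',x_2')\in R$; since $R(x_1')$ is a singleton it must equal $\{x_2'\}$, so the universal quantifier ``for every $x_2'$ with $(x_1',x_2')\in R$'' appearing in $\relTwoAbr$ ranges over this one point only, which lies in $F_2(x_2,u_2)$. This establishes the $\relTwoAbr$ clause, hence the converse.

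I do not expect a genuine obstacle: each direction is a two-line quantifier manipulation. The thing to be careful about is the bookkeeping of hypotheses, namely that strictness (nonemptiness of each $R(x_1')$) is exactly what (i) uses while single-valuedness is exactly what (ii) uses, and that one must not silently borrow single-valuedness in (i). It is also worth verifying explicitly that the witness input $u_1$ transfers between the two definitions, which it does because the quantifier prefix $\forall (x_1,x_2)\,\forall u_2\,\exists u_1$ is identical in $\relOneAbr$ and $\relTwoAbr$ and only the innermost clause differs.
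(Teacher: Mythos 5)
Your proposal is correct and follows essentially the same route as the paper: both rewrite the inner clauses as $R(x_1')\cap F_2(x_2,u_2)\ne\myemptyset$ versus $R(x_1')\subseteq F_2(x_2,u_2)$, then invoke strictness of $R$ for (i) and single-valuedness for (ii). The bookkeeping of which hypothesis each direction uses matches the paper's argument exactly.
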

\begin{proof}
\hspace{1cm}
\begin{enumerate}
    \item[(i)] Rewriting the definition slightly, we have\begin{itemize}
        \item  $\relOneAbr$:
        for every $(\vect x_{1},\vect x_{2}) \in R$ and for every $\vect u_2\in \set{U}_2(x_2)$ there exists $\vect u_1\in \set{U}_1(x_1)$ such that for every $\vect x_1'\in F_1(x_1,u_1): R(x_1')\cap F_2(x_2,u_2) \neq \myemptyset$.
       \item $\relTwoAbr$: for every $(\vect x_{1},\vect x_{2}) \in R$
       and for every $\vect u_2\in \set{U}_2(x_2)$ there exists $\vect u_1\in \set{U}_1(x_1)$ such that for every $\vect x_1'\in F_1(x_1,u_1): R(x_1')\subseteq F_2(x_2,u_2)$.
    \end{itemize}
Since $R$ is strict, $R(x_1')$ is nonempty and therefore $R(x_1')\subseteq F_2(x_2,u_2)$ implies $ R(x_1')\cap F_2(x_2,u_2) \neq \myemptyset$.

    \item[(ii)]
   Given that $R$ is a strict relation and single-valued, it can be established that for any $\vect x_1 \in \set{X}_1$: $|R(\vect x_1)| = 1$. Therefore $R(x_1')\cap F_2(x_2,u_2)\neq \myemptyset$ if and only if $R(x_1')\subseteq F_2(x_2,u_2)$.
\end{enumerate}
\end{proof}
Note that the condition that the relation forms a partition is sufficient to guarantee that an $\relOneAbr$ is a $\relTwoAbr$, but it is not a necessary condition.

We prove that $\preceq^{\relTwoAbr}$ is reflexive and transitive, which justifies the use of the pre-order symbol $\preceq$. 
\begin{proposition}\label{prop:WFRR:refltrans}
Let $\Sys_1, \Sys_2$ and $\Sys_3$ be transition systems, and $R,Q$ be strict relations. Then 
\begin{enumerate}
    \item $\Sys_1 \preceq_{Id_{\set{X}_1}}^{\relTwoAbr} \Sys_1$;
    \item If $\Sys_1 \preceq_R^{\relTwoAbr} \Sys_2$ and $\Sys_2 \preceq_Q^{\relTwoAbr} \Sys_3$, then $\Sys_1 \preceq_{R\circ Q}^{\relTwoAbr} \Sys_3$.
\end{enumerate}
\end{proposition}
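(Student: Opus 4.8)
The plan is to prove the two parts of \Cref{prop:WFRR:refltrans} separately, unfolding the definition of $\relTwoAbr$ (\Cref{def:WFRR}) in each case. For reflexivity, I would take $R = Id_{\set{X}_1}$ and observe that $R(x_1') = \{x_1'\}$ is a singleton, so the universal quantifier ``for every $x_2'$ such that $(x_1',x_2')\in R$'' collapses to the single point $x_2' = x_1'$. Then the $\relTwoAbr$ condition reduces to: for every $x_1$ and every $u_1 \in \set{U}_1(x_1)$ (taking $u_2 = u_1$), for every $x_1' \in F_1(x_1,u_1)$ we have $x_1' \in F_1(x_1,u_1)$, which is trivially true. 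So picking $u_1 = u_2$ discharges the existential and the claim follows immediately.

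For transitivity, I would fix $(x_1,x_3) \in R\circ Q$, so by definition of composition there exists $x_2 \in \set{X}_2$ with $(x_1,x_2)\in R$ and $(x_2,x_3)\in Q$. Given $u_3 \in \set{U}_3(x_3)$, I apply the $\relTwoAbr$ condition for $\Sys_2 \preceq_Q^{\relTwoAbr} \Sys_3$ at the pair $(x_2,x_3)$ to obtain $u_2 \in \set{U}_2(x_2)$ such that every $x_2' \in F_2(x_2,u_2)$ has $Q(x_2') \subseteq F_3(x_3,u_3)$. Then I apply the $\relTwoAbr$ condition for $\Sys_1 \preceq_R^{\relTwoAbr} \Sys_2$ at $(x_1,x_2)$ with this $u_2$ to obtain $u_1 \in \set{U}_1(x_1)$ such that every $x_1' \in F_1(x_1,u_1)$ has $R(x_1') \subseteq F_2(x_2,u_2)$. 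This $u_1$ is the witness for the composite relation.

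The key verification step is then chaining the two inclusions: I need to show that for every $x_1' \in F_1(x_1,u_1)$, we have $(R\circ Q)(x_1') \subseteq F_3(x_3,u_3)$. Take any $x_3'$ with $(x_1',x_3') \in R\circ Q$; then there is $x_2'$ with $(x_1',x_2')\in R$ and $(x_2',x_3')\in Q$. From the first application, $x_2' \in R(x_1') \subseteq F_2(x_2,u_2)$, so $x_2' \in F_2(x_2,u_2)$. From the second application applied to this $x_2'$, $x_3' \in Q(x_2') \subseteq F_3(x_3,u_3)$. Hence $x_3' \in F_3(x_3,u_3)$, which is exactly what is required.

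The main obstacle — really the only subtlety — is making sure the relation-composition semantics are handled cleanly, in particular that $(R\circ Q)(x_1')$ is precisely the set of $x_3'$ reachable through some intermediate $x_2'$, matching the $R\circ Q = \{(a,c) \mid \exists b: (a,b)\in R, (b,c)\in Q\}$ convention fixed in the notation section. One should also confirm that strictness of $R$ and $Q$ (hence of $R\circ Q$) is preserved so that the resulting relation is a legitimate strict relation as required by the proposition hypotheses, though the inclusion argument itself does not need strictness. Once the intermediate witness $x_2'$ is correctly threaded through both inclusions, the argument is a direct diagram chase with no estimation or case analysis.
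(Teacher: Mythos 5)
Your proposal is correct and follows essentially the same route as the paper's own proof: reflexivity by taking $u_1=u_2$ under the identity relation, and transitivity by composing the two inclusions $R(F_1(x_1,u_1))\subseteq F_2(x_2,u_2)$ and $Q(F_2(x_2,u_2))\subseteq F_3(x_3,u_3)$ through the intermediate state $x_2$. Your explicit element-chase through $(R\circ Q)(x_1')=Q(R(x_1'))$ is just a more detailed rendering of the paper's one-line chaining, and your remark on strictness of $R\circ Q$ matches the paper's.
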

\begin{proof}
The identity relation $Id_{\set{X}_1}$ satisfies the definition of 
$\relTwoAbr$ with $\Sys_1 = \Sys_2$, $\vect x_1=\vect x_2$ and $\vect u_1=\vect u_2$, which proves~(1).
To prove~(2), assume that $\Sys_1 \preceq_R^{\relTwoAbr} \Sys_2 \preceq_Q^{\relTwoAbr} \Sys_3$. Then $R\circ Q$ is strict since both $R$ and $Q$ are strict.
Let $(\vect x_1,\vect x_3) \in R\circ Q$. Then there exists $\vect x_2\in\set{X}_2$ such that $(\vect x_1,\vect x_2)\in R$ and $(\vect x_2,\vect x_3)\in Q$.
Since both $Q$ and $R$ are $\relTwoAbr$, then we have
\begin{align*}
    &\forall \vect u_3\in \set{U}_3(x_3) \exists \vect u_2 \in \set{U}_2(x_2): \ Q(F_2(x_2,u_2))\subseteq F_3(x_3,u_3);\\
    &\forall \vect u_2\in \set{U}_2(x_2) \exists \vect u_1 \in \set{U}_1(x_1): \ R(F_1(x_1,u_1))\subseteq F_2(x_2,u_2);
\end{align*}
from which follows 
$$\forall \vect u_3\in \set{U}_3(x_3) \exists \vect u_1 \in \set{U}_1(x_1): \ Q(R(F_1(x_1,u_1)))\subseteq F_3(x_3,u_3)$$
and so $\Sys_1 \preceq_{R\circ Q}^{\relTwoAbr} \Sys_3$.
\end{proof}



\begin{property}[\PropTwo]\label{prop:WFRR}
\modif{Given two systems $\Sys_1$ and $\Sys_2$, and a strict relation $R\subseteq \set{X}_1\times \set{X}_2$ of type $\T$,  
we say that the tuple $(\Sys_1, \Sys_2, R)$ satisfies the \emph{\propTwo{}} if if every controller $\Cont_2$
\begin{equation}\label{eq:propTwoSet}
    R(\set{B}_x(\Cont_1\times \Sys_1))\subseteq \set{B}_x(\Cont_2\times \Sys_2)
\end{equation}
with $\Cont_1 = \Cont_2\circ_{\modif{I_R^{\T}}} R$.}
\end{property}
This property guarantees that the controller $\Cont_1 = \Cont_2\circ_{\modif{I_R^{\T}}} R$ can only generate abstract trajectories that belongs to the behavior of the system $\Cont_2\times \Sys_2$. 
To compare and contrast with~\Cref{prop:reproductibility}, we require two different things.
The first is that every quantization of a concrete trajectory is a valid abstract trajectory, as opposed to the existence of one related abstract trajectory. This is preferable if the quantizer is assumed adversarial, or alternatively, if you are free to choose the quantization that suits you best, e.g., the fastest to compute. 
The second thing that \Cref{prop:WFRR} requires is that \eqref{eq:propTwoSet} holds for a very specific controller, which gives us a straightforward implementation.


\begin{theorem}\label{th:WFRR:sufficient}
    \modif{Given two transition systems $\Sys_1$ and $\Sys_2$, and a strict relation $R$, if $\Sys_1\preceq_R^{\relTwoAbr} \Sys_2$ then   
    the tuple $(\Sys_1, \Sys_2, R)$ satisfies the \emph{\propTwo{}} (\Cref{prop:WFRR}).}
\end{theorem}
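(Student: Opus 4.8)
The plan is to reduce the set inclusion \eqref{eq:propTwoSet} to a statement about single trajectories and then to a purely local, one-step claim. I would fix an arbitrary $\seq x_1\in\set{B}_x(\Cont_1\times\Sys_1)$ with a witnessing input sequence $\seq u_1$ (so $(\seq x_1,\seq u_1)\in\set{B}(\Cont_1\times\Sys_1)$ and $\Cont_1=\Cont_2\circ_{I_R^{\relTwoAbr}}R$), and an arbitrary abstract sequence $\seq x_2\in R(\seq x_1)$, i.e.\ $(x_1(k),x_2(k))\in R$ for all $k$. Since $\seq x_2$ is given outright, the only thing missing to conclude $\seq x_2\in\set{B}_x(\Cont_2\times\Sys_2)$ is an input sequence $\seq u_2$; and because the constraints defining a trajectory of $\Cont_2\times\Sys_2$ decouple across time once $\seq x_2$ is fixed, it suffices to produce, for each $k$, an admissible $u_2(k)\in\Cont_2(x_2(k))$ with $x_2(k+1)\in F_2(x_2(k),u_2(k))$. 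This can be packaged as an induction on $k$ mirroring \Cref{th:ASR-reproductibility}, but the content is the one-step claim.

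To settle the one-step claim I would use the reformulation of $\relTwoAbr$ recorded in the proof of \Cref{prop:WFRR-ASR}, namely that $(x_1,x_2,u_1,u_2)\in R_e^{\relTwoAbr}$ is exactly the \emph{containment} $R(F_1(x_1,u_1))\subseteq F_2(x_2,u_2)$ -- strictly stronger than the non-empty intersection granted by $\relOneAbr$. Suppose I can match a controller-admissible $u_2(k)\in\Cont_2(x_2(k))$ to the played input $u_1(k)$ so that $(x_1(k),x_2(k),u_1(k),u_2(k))\in R_e^{\relTwoAbr}$. Then the step closes at once: strictness of $R$ keeps $R(x_1(k+1))$ nonempty, and $x_2(k+1)\in R(x_1(k+1))\subseteq R(F_1(x_1(k),u_1(k)))\subseteq F_2(x_2(k),u_2(k))$, so the abstract transition is realized by the admissible input $u_2(k)$.

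The matching is the one genuinely delicate point, and it is where the memoryless architecture collides with the ``for every quantization'' reading of \eqref{eq:propTwoSet}. By definition $\Cont_1(x_1(k))=\bigcup_{x_2'\in R(x_1(k))}I_R^{\relTwoAbr}(x_1(k),x_2',\Cont_2(x_2'))$, so \eqref{eq:interface:subset} certifies the played input $u_1(k)$ only for \emph{some} witness $\hat x_2\in R(x_1(k))$ occurring in the union, giving $R(F_1(x_1(k),u_1(k)))\subseteq F_2(\hat x_2,\hat u_2)$ for a $\hat u_2\in\Cont_2(\hat x_2)$; what the claim needs is the same containment at the \emph{tracked} state $x_2(k)$, which in a genuine cover need not coincide with $\hat x_2$. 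Bridging $\hat x_2$ to $x_2(k)$ is precisely the gap that a plain $\relOneAbr$ leaves open -- exactly the failure exhibited by the trajectory $\seq x_1=(1,2,3)$ in \Cref{fig:WFRR-motivation-1} -- so this is the step I would scrutinise hardest. I would try to show that imposing the $\relTwoAbr$ condition simultaneously at \emph{every} abstract state of $R(x_1(k))$ forces $u_1(k)\in I_R^{\relTwoAbr}(x_1(k),x_2(k),\Cont_2(x_2(k)))$ for the tracked $x_2(k)$ and not merely for the witness, preferably working with the maximal interface so the certificate is as large as possible; I expect the entire strength of the hypothesis ``$R$ is a $\relTwoAbr$ rather than only an $\relOneAbr$'' to be consumed by this uniformity over $R(x_1(k))$.
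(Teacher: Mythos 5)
Your skeleton is the paper's own: fix $(\seq x_1,\seq u_1)\in\set{B}(\Cont_1\times\Sys_1)$ and an arbitrary quantization $\seq x_2$ with $x_2(k)\in R(x_1(k))$, reduce \eqref{eq:propTwoSet} to a one-step claim, and close the step using the containment $R(F_1(x_1,u_1))\subseteq F_2(x_2,u_2)$ together with strictness of $R$. The difference is in how the matching step is treated. The paper's proof simply asserts it: it claims that for \emph{any} related sequence $\seq x_2$ one can choose $u_2(k)\in\Cont_2(x_2(k))$ with $u_1(k)\in I_R^{\relTwoAbr}(x_1(k),x_2(k),u_2(k))$, justified only by the non-emptiness of $\Cont_2$ and of the interface. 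You correctly observe that non-emptiness certifies the \emph{played} input $u_1(k)$ only at some witness $\hat x_2\in R(x_1(k))$ arising from the union in \eqref{eq:local-refined-controller}, not at the tracked $x_2(k)$. So you have isolated exactly the right crux; the problem is that your proposal ends with ``I would try to show'' the required uniformity over $R(x_1(k))$, and that uniformity is false.

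Concretely: let $\set{X}_1=\{1,2\}$, $\set{U}_1=\{u,v\}$, $F_1(1,u)=F_1(2,u)=\{2\}$, $F_1(1,v)=\{1\}$, $F_1(2,v)=\myemptyset$; let $\set{X}_2=\{a,b,c\}$ with $\set{U}_2(a)=\set{U}_2(c)=\{\alpha\}$, $\set{U}_2(b)=\{\beta\}$, $F_2(a,\alpha)=F_2(c,\alpha)=\{c\}$, $F_2(b,\beta)=\{a,b\}$; and let $R=\{(1,a),(1,b),(2,c)\}$, which is strict. One checks $\Sys_1\preceq_R^{\relTwoAbr}\Sys_2$ ($u$ certifies $(1,a,\alpha)$ since $R(F_1(1,u))=\{c\}\subseteq F_2(a,\alpha)$, $v$ certifies $(1,b,\beta)$ since $R(F_1(1,v))=\{a,b\}\subseteq F_2(b,\beta)$, and $u$ certifies $(2,c,\alpha)$). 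The interface values are forced to be $I_R^{\relTwoAbr}(1,a,\alpha)=\{u\}$ and $I_R^{\relTwoAbr}(1,b,\beta)=\{v\}$, so for the controller $\Cont_2(a)=\{\alpha\}$, $\Cont_2(b)=\{\beta\}$, $\Cont_2(c)=\{\alpha\}$ the union gives $\Cont_1(1)=\{u,v\}$. The concrete trajectory $\seq x_1=(1,2)$ with $u_1(0)=u$ belongs to $\set{B}(\Cont_1\times\Sys_1)$ and its quantization $(b,c)$ lies in $R(\set{B}_x(\Cont_1\times\Sys_1))$, yet $c\notin F_2(b,\beta)$, so $(b,c)\notin\set{B}_x(\Cont_2\times\Sys_2)$. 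Hence the lemma you hoped to prove cannot hold, and the one-step claim is recoverable only if $\seq x_2$ is restricted to the quantization \emph{actually used} to select $\seq u_1$ in the architecture of \Cref{fig:controller-architecture-WFRR} (for which the paper's matching condition holds by construction of the loop), or if $\Cont_1$ is redefined to enable an input only when it is certified at every $x_2\in R(x_1)$ whose cell contains $x_1$. In short: your approach mirrors the paper's, your suspicion lands precisely where the paper's own justification is too thin, and the missing step cannot be filled in the form in which you (and the paper) state it.
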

\begin{proof}

Consider a trajectory $(\seq x_1, \seq u_1)\in \set{B}(\Cont_1 \times \Sys_1)$ of length $T$ with $\Cont_1 = \Cont_2\circ_{\modif{I_R^{\relTwoAbr}}} R$, i.e.,
\begin{enumerate}
    \item[(1)] $\forall k\in [0;T[:\ x_1(k+1)\in F_1(x_1(k),u_1(k))$;
    \item[(2)] $\forall k\in [0;T-1[:\ u_1(k)\in \Cont_1(x_1)$.
\end{enumerate}

Then by definition of~$\Cont_1$, for any related sequence $\seq x_2$ of length $T$ such that $\forall k\in [0;T[: \ x_2(k) \in R(x_1(k))$, there exists a sequence $\seq u_2$ of length $T-1$ such that 
\begin{enumerate}
    \item[(1')] $\forall k\in [0;T-1[: \ u_2(k)\in \Cont_2(x_2(k))$;
    \item[(2')] $\forall k\in [0;T-1[: \ u_1(k)\in \modif{I_R^{\relTwoAbr}}(\vect x_1(k),\vect x_2(k),\vect u_2(k))$.
\end{enumerate}
because $\Cont_2$ and $\modif{I_R^{\relTwoAbr}}$ are non-empty.

Our goal is to establish that $(\seq x_2,\seq u_2)\in \set{B}(\Cont_2\times \Sys_2)$, which translates to
\begin{enumerate}
    \item[(1'')] $\forall k\in [0;T-1[:\ u_2(k)\in \set{U}_2(x_2(k))$;
    \item[(2'')] $\forall k\in [0;T-1[:\ x_2(k+1)\in F_2(x_2(k),u_2(k))$.
\end{enumerate}
To prove (1''), we can directly use condition (1').
To prove (2''), taking into account condition (2') and $\Sys_1\preceq_{R}^{\relTwoAbr} \Sys_2$, we can observe that
\begin{equation}\label{eq:proof:WFRR}
    \forall x_1'\in F_1(x_1(k),u_1(k)): \ (x_1',x_2')\in R \Rightarrow x_2'\in F_2(x_2(k),u_2(k)).
\end{equation}
 Given the preceding conditions (1'), (2'), and the established implication \eqref{eq:proof:WFRR}, it follows that $x_2(k+1)\in F_2(x_2(k), u_2(k))$.
\end{proof}

Furthermore, the existence of a \relTwo{} between the concrete system and the abstraction is not only sufficient to guarantee \propTwo{}, it is also necessary when we want it to be applicable whatever the particular specification we seek to impose on the concrete system, as established by the following theorem.

\begin{theorem}\label{th:WFRR:necessary}
    \modif{Given two systems $\Sys_1$ and $\Sys_2$, and a strict relation $R$ such that $\Sys_1\preceq_R^{\relOneAbr } \Sys_2$, 
    if $(\Sys_1,\Sys_2, R)$ satisfies the \propTwo{} (\Cref{prop:WFRR}), then $\Sys_1\preceq_R^{\relTwoAbr} \Sys_2$.
    }
\end{theorem}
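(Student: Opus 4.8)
The plan is to prove the contrapositive. I assume the standing hypothesis $\Sys_1\preceq_R^{\relOneAbr}\Sys_2$ together with the negation of $\Sys_1\preceq_R^{\relTwoAbr}\Sys_2$, and I exhibit a single controller $\Cont_2$ whose associated \controllerTwo{} $\Cont_1=\Cont_2\circ_{I_R^{\relOneAbr}}R$ violates~\eqref{eq:propTwoSet}, thereby contradicting the \propTwo{} (\Cref{prop:WFRR}). Spelling out the negation of \Cref{def:WFRR}, there exist a pair $(\bar x_1,\bar x_2)\in R$ and an input $\bar u_2\in\set{U}_2(\bar x_2)$ such that for \emph{every} $u_1\in\set{U}_1(\bar x_1)$ one can find a successor $x_1'\in F_1(\bar x_1,u_1)$ and a related abstract state $x_2'\in R(x_1')$ with $x_2'\notin F_2(\bar x_2,\bar u_2)$; equivalently $R(F_1(\bar x_1,u_1))\not\subseteq F_2(\bar x_2,\bar u_2)$.

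I then define $\Cont_2$ by $\Cont_2(\bar x_2)=\{\bar u_2\}$ and letting $\Cont_2$ take any nonempty admissible value at all other states (possible since $\Sys_2$ is non-blocking). The essential design choice is that $\bar u_2$ is the \emph{only} input enabled at $\bar x_2$. Because $\Sys_1\preceq_R^{\relOneAbr}\Sys_2$, the interface value $I_R^{\relOneAbr}(\bar x_1,\bar x_2,\bar u_2)$ is nonempty by~\eqref{eq:interface:non-empty}, so it contains some $\bar u_1$; by~\eqref{eq:interface:subset} we have $(\bar x_1,\bar x_2,\bar u_1,\bar u_2)\in R_e^{\relOneAbr}$, and in particular $\bar u_1\in\set{U}_1(\bar x_1)$. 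Moreover, since $\bar x_2\in R(\bar x_1)$, the definition~\eqref{eq:local-refined-controller} of the \controllerTwo{} gives $\bar u_1\in\Cont_1(\bar x_1)$. Applying the negation of $\relTwoAbr$ to this particular $\bar u_1$ produces $x_1'\in F_1(\bar x_1,\bar u_1)$ and $x_2'\in R(x_1')$ with $x_2'\notin F_2(\bar x_2,\bar u_2)$.

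To conclude, I consider the length-two concrete trajectory $\seq x_1=(\bar x_1,x_1')$ with $\seq u_1=(\bar u_1)$. Since $\bar u_1\in\Cont_1(\bar x_1)$ and $x_1'\in F_1(\bar x_1,\bar u_1)$, this is a trajectory of $\Cont_1\times\Sys_1$, so $\seq x_1\in\set{B}_x(\Cont_1\times\Sys_1)$. The related abstract sequence $\seq x_2=(\bar x_2,x_2')$ satisfies $\bar x_2\in R(\bar x_1)$ and $x_2'\in R(x_1')$, hence $\seq x_2\in R(\set{B}_x(\Cont_1\times\Sys_1))$. However, because the only input enabled by $\Cont_2$ at $\bar x_2$ is $\bar u_2$ and $x_2'\notin F_2(\bar x_2,\bar u_2)$, there is no admissible abstract transition from $\bar x_2$ to $x_2'$ in $\Cont_2\times\Sys_2$, so $\seq x_2\notin\set{B}_x(\Cont_2\times\Sys_2)$. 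This contradicts~\eqref{eq:propTwoSet}, completing the contrapositive.

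The main obstacle is ensuring the chosen quantization is genuinely invalid for $\Cont_2\times\Sys_2$: this is exactly why $\Cont_2$ must enable \emph{only} $\bar u_2$ at $\bar x_2$, so that the unique candidate abstract transition is ruled out (had $\Cont_2$ enabled other inputs, some of them might render $x_2'$ reachable). A second point needing care is that the failure of $\relTwoAbr$ is invoked at the precise input $\bar u_1$ returned by the interface; this is legitimate because that negation is universally quantified over all $u_1\in\set{U}_1(\bar x_1)$, and the $\relOneAbr$ hypothesis is precisely what guarantees, via non-emptiness of the interface, that such a $\bar u_1$ exists and lies in $\Cont_1(\bar x_1)$. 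No real computation is involved; the work is entirely in tracking the alternation of quantifiers and in exploiting the adversarial freedom in choosing the quantized abstract state $x_2'\in R(x_1')$.
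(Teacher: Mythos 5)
Your proof is correct and follows essentially the same route as the paper's: contraposition, a controller $\Cont_2$ that enables only the offending input $\bar u_2$ at $\bar x_2$, an interface-selected $\bar u_1\in\Cont_1(\bar x_1)$, and a length-two trajectory whose quantization cannot be a trajectory of $\Cont_2\times\Sys_2$. If anything, you are slightly more careful than the paper in two minor respects: you invoke the interface $I_R^{\relOneAbr}$ (whose non-emptiness follows from the standing $\relOneAbr$ hypothesis, whereas the paper writes $I_R^{\relTwoAbr}$, which is not well-defined when the contrapositive assumption denies the $\relTwoAbr$), and you note that $\Cont_2$ must be completed to a valid controller on the remaining states.
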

\begin{proof}
We proceed by contraposition. Let's assume that $R$ does not satisfy the condition of~\Cref{def:WFRR}. This implies the existence of $(\vect x_1, \vect x_2) \in R$ and $\vect u_2 \in 
\set{U}_2(x_2)$ such that for every $\vect u_1 \in \set{U}_1(x_1)$, there exist $\vect x_1' \in F_1(x_1, u_1)$ and $\vect x_2' \in R(\vect x_1')$ such that $\vect x_2' \notin F_2(x_2, u_2)$.
We consider an abstract controller $\Cont_2$ such that $\Cont_2(x_2) = \{u_2\}$.
Let $u_1\in \modif{I_R^{\relTwoAbr}}(x_1,x_2,u_2)\subseteq \set{U}_1(x_1)$.
Let's define the sequences $\seq{\tilde{x}}_1 = (\vect x_1, \vect x_1')$, $\seq{\tilde{u}}_1 = (\vect u_1)$, $\seq{\tilde{x}}_2 = (\vect x_2, \vect x_2')$, and $\seq{\tilde{u}}_2 = (\vect u_2)$. 
We observe that $(\seq{\tilde{x}}_1, \seq{\tilde{u}}_1)\in \set{B}(\Cont_1\times \Sys_1)$ with $\Cont_1 =  \Cont_2\circ_{\modif{I_R^{\relTwoAbr}}} R$ while $(\seq{\tilde{x}}_2, \seq{\tilde{u}}_2) \notin \set{B}(\Cont_2 \times \Sys_2)$ since $\tilde{x}_2(1) \notin F_2(x_2, u_2)$.
This means that the tuple \modif{$(\Sys_1, \Sys_2, R)$} does not satisfy the \propTwo{}. This concludes the proof.
\end{proof}

From any given $\relOneAbr$ abstraction, it is possible to construct a $\relTwoAbr$ abstraction, albeit with an increase in non-determinism. This augmentation is achieved by introducing additional transitions, as formally established by the subsequent proposition.

\begin{proposition}\label{prop:ASR-to-WFRR}
    Consider two systems $\Sys_1=(\set{X}_1, \set{U}_1, F_1)$ and $\Sys_2=(\set{X}_2, \set{U}_2, F_2)$ and a relation $R$ that satisfies $\Sys_1\preceq_{R}^{\relOneAbr} \Sys_2$. Then the system $\Sys_2' = (\set{X}_2, \set{U}_2, F_2')$ such that $\forall x_2\in\set{X}_2$ and $\forall u_2\in\set{U}_2(x_2)$:
    \begin{enumerate}
        \item[(i)] $\set{U}_2'(x_2) = \set{U}_2(x_2)$;
        \item[(ii)] $F_2'(x_2, u_2) =  F_2(x_2,u_2) \cup \left(\bigcup_{(x_1,x_2,u_1,u_2)\in R_e^{\relOneAbr}} \ R(F_1(x_1, u_1))\right)$;
    \end{enumerate}
    satisfies $\Sys_2\preceq_{Id_{\set{X}_2}}^{\relTwoAbr} \Sys_2'$ and  $\Sys_1\preceq_{R}^{\relTwoAbr} \Sys_2'$.\\
We call it the $\relTwoAbr$-extension of $\Sys_2$.
\end{proposition}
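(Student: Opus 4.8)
The plan is to verify each of the two refinement claims directly from the equivalent ``inclusion'' form of the $\relTwoAbr$ condition recorded in the proof of \Cref{prop:WFRR-ASR}: a relation $P$ satisfies $\Sys_a \preceq_{P}^{\relTwoAbr} \Sys_b$ iff for every $(x,x')\in P$ and every input $u'\in\set{U}_b(x')$ there is $u\in\set{U}_a(x)$ with $P(F_a(x,u))\subseteq F_b(x',u')$. Before doing so I would record the elementary consistency check that $\Sys_2'$ is well defined: since every tuple in $R_e^{\relOneAbr}$ has its abstract-input coordinate in $\set{U}_2(x_2)$, the union term in (ii) only enlarges $F_2(x_2,u_2)$ for inputs $u_2\in\set{U}_2(x_2)$; together with $F_2\subseteq F_2'$ this gives $F_2'(x_2,u_2)\neq\myemptyset$ iff $u_2\in\set{U}_2(x_2)$, so (i) is compatible with (ii) and $\Sys_2'$ inherits non-blockingness from $\Sys_2$.

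For $\Sys_2 \preceq_{Id_{\set{X}_2}}^{\relTwoAbr} \Sys_2'$, I would unpack the condition with $P=Id_{\set{X}_2}$: because $Id_{\set{X}_2}(y)=\{y\}$, the requirement collapses to ``for every $x_2$ and every $u_2'\in\set{U}_2'(x_2)$ there is $u_2\in\set{U}_2(x_2)$ with $F_2(x_2,u_2)\subseteq F_2'(x_2,u_2')$''. Using (i) we have $u_2'\in\set{U}_2(x_2)$, so the choice $u_2=u_2'$ is admissible, and then the inclusion $F_2(x_2,u_2')\subseteq F_2'(x_2,u_2')$ is immediate from (ii), which defines $F_2'$ as a superset of $F_2$. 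This claim is essentially immediate once the definitions are unrolled.

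The second claim $\Sys_1 \preceq_{R}^{\relTwoAbr} \Sys_2'$ is where the construction does the real work. Fix $(x_1,x_2)\in R$ and $u_2\in\set{U}_2'(x_2)=\set{U}_2(x_2)$. I would invoke $\Sys_1\preceq_R^{\relOneAbr}\Sys_2$ to obtain a concrete input $u_1\in\set{U}_1(x_1)$ witnessing the $\relOneAbr$ local condition at $(x_1,x_2,u_2)$; by definition of the extended relation this means precisely $(x_1,x_2,u_1,u_2)\in R_e^{\relOneAbr}$. The key observation is then that for this particular $u_1$ the set $R(F_1(x_1,u_1))$ appears verbatim as one of the sets indexed in the union in (ii), whence $R(F_1(x_1,u_1))\subseteq F_2'(x_2,u_2)$, which is exactly the inclusion form of the $\relTwoAbr$ condition. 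Since $u_1\in\set{U}_1(x_1)$ it is an admissible witness, and the claim follows.

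The main conceptual point --- and the only place something nontrivial happens --- is the contrast between the two local conditions: the $\relOneAbr$ witness $u_1$ only guarantees $R(x_1')\cap F_2(x_2,u_2)\neq\myemptyset$ for each successor $x_1'$, whereas $\relTwoAbr$ demands $R(x_1')\subseteq F_2'(x_2,u_2)$. Construction (ii) resolves exactly this gap: by adjoining, for every admissible pair $(x_1,u_1)$ refining $(x_2,u_2)$, the entire quantized successor set $R(F_1(x_1,u_1))$, it forces containment rather than mere intersection while leaving the abstract input alphabet untouched. The price, as the name $\relTwoAbr$-extension suggests, is added nondeterminism in $F_2'$; I would also note that strictness of $R$ is not actually used for either inclusion, since only $Id_{\set{X}_2}$ --- automatically strict and single-valued --- enters the first claim, and empty images in the second claim only make the required inclusion trivially true.
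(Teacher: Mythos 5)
Your proof is correct and follows essentially the same route as the paper's: take $u_2=u_2'$ for the claim $\Sys_2\preceq_{Id_{\set{X}_2}}^{\relTwoAbr}\Sys_2'$, and for $\Sys_1\preceq_{R}^{\relTwoAbr}\Sys_2'$ use the $\relOneAbr$ witness $u_1$ so that $R(F_1(x_1,u_1))$ is literally one of the sets adjoined in (ii). The only difference is that the paper also records the intermediate fact $\Sys_1\preceq_{R}^{\relOneAbr}\Sys_2'$ via transitivity, which your more direct argument does not need; your added well-definedness check and the remark that strictness of $R$ is not used are accurate bonuses.
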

\begin{proof} 
The condition of~\Cref{def:WFRR}, i.e.,  $\forall (x_2, x_2)\in Id_{\set{X}_2}$ and $\forall u_2'\in\set{U}_2'(x_2)$ there exists $u_2\in \set{U}_2(x_2)$ such that $F_2(x_2,u_2)\subseteq Id_{\set{X}_2}(F_2'(x_2,u_2'))$,
 holds by taking $u_2=u_2'$ (by (i)), and the inclusion is ensured by (ii). This conclude the proof that $\Sys_2\preceq_{Id_{\set{X}_2}}^{\relTwoAbr} \Sys_2'$.

 Since $\Sys_1\preceq_{R}^{\relOneAbr} \Sys_2$ and $\Sys_2\preceq_{Id_{\set{X}_2}}^{\relTwoAbr} \Sys_2'$, by the transitivity of $\relOneAbr{}$, we can easily conclude that $\Sys_1\preceq_{R\circ Id_{\set{X}_2}}^{\relOneAbr} \Sys_2'$, which is equivalent to $\Sys_1\preceq_{R}^{\relOneAbr{}} \Sys_2'$.
In addition, by~(ii), the $\relTwoAbr$ condition, i.e., $\forall (x_1,x_2)\in R$ and $\forall u_2\in \set{U}_2(x_2)$, $\exists u_1\in \set{U}_1(x_1): 
 \ R(F_1(x_1,u_1))\subseteq F_2'(x_2,u_2)$, holds.
 This completes the proof that $\Sys_1\preceq_{R}^{\relTwoAbr} \Sys_2'$. 
\end{proof} 
Note that condition (i) limits the addition of a transition to existing labeled transitions. 

We stress that the system $\Sys_2'=(\set{X}_2, \set{U}_2, F_2')$ of~\Cref{prop:ASR-to-WFRR} is not the only way\footnote{Intuitively, we are extending along all $u_1$ related by $\relOneAbr$ to $u_2$, when extending along only one would suffice, but in general, the choice of $u_1$ is not so well defined.} to define a $\relTwoAbr$-extension of $\Sys_2$. We chose this definition to show existence of a $\relTwoAbr$-extension.
%
\subsection{Concretization procedure}
\begin{figure}
    \centering
    \includegraphics[width=0.42\textwidth]{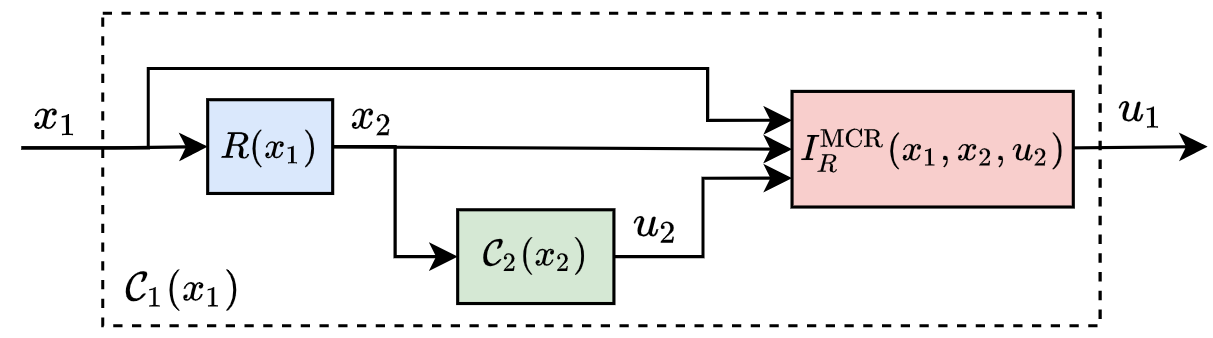}
    \caption{\ControllerTwo{} architecture $\Cont_1 = \Cont_2\circ_{\modif{I_R^{\relTwoAbr}}} R$ such that $(\Sys_1, \Sys_2, \Cont_1, \Cont_2, R)$ satisfies the \propOne{} if $\Sys_1\preceq_R^{\relTwoAbr} \Sys_2$. 
    }
    \label{fig:controller-architecture-WFRR}
\end{figure}

As already mention with the motivating example of this section, the \relTwo{} enables simpler concrete control architecture illustrated in~\Cref{fig:controller-architecture-WFRR}. 
The advantage of this control architecture is that the concrete controller only depends on the current concrete state and does not need to keep track of where the abstract system is supposed to be. In practice the abstraction is not needed once the abstract controller has been designed.
Note, if the abstract controller $\Cont_2$ is static, then the concretized controller will be static as well. 

Nevertheless, the interface $\modif{I_R^{\relTwoAbr}}$ implicitly embeds the extended relation $\modif{R_e^{\relTwoAbr}}$ into the concrete controller.
A very convenient case is when the interface is a function for which we have an explicit characterization, as we can simply compute it as needed, removing the implementation cost of storing the extended relation.
For example, consider the context of a piecewise affine controller for the concrete system, i.e., where given $x_2\in\set{X}_2$ and $u_2\in \Cont_2(x_2)$, we have
$\forall x_1\in R^{-1}(x_2): \ \modif{I_R^{\relTwoAbr}}(\vect x_1,\vect x_2,\vect u_2) = \kappa(\vect x_1)= \m K \vect x_1 + \vect l$. 
The abstract input $u_2\in \Cont_2(\vect x_2)$ can be interpreted as the local controller $\kappa$ for as long as we are in the cell $R^{-1}(x_2)\subseteq \set{X}_1$.


Let's now return to the motivating example of this section given in~\Cref{fig:WFRR-motivation-1}.
First note that the relation $R$ is not a $\relTwoAbr$ for systems $\Sys_1$ and $\Sys_2$ ($\Sys_1 \npreceq_{R}^{\relTwoAbr} \Sys_2$) due to the following observations:
$(1,a)\in R$, $\alpha\in \set{U}_2(a)$, $0\in \set{U}_1(1)$, $2\in F_1(1,0)$, $c\in R(2)$ and $c\notin F_2(a, \alpha)$ as 
illustrated in~\Cref{fig:WFRR-motivation-2}. 
%
We will now turn our attention to system $\Sys_2'$ introduced in~\Cref{fig:WFRR-motivation-1}. We can prove that $\Sys_1\preceq_{R}^{\relTwoAbr} \Sys_2'$ and that, consequently, \Cref{th:WFRR:sufficient} guarantees that any \controllerTwo{} will satisfy the \propOne{}. 
However, the gain in this property comes at the cost of an increase in non-determinism in the abstraction (note that $\Sys_2'$ is the~$\relTwoAbr$-extension of $\Sys_2$): whereas $\Sys_2$ was deterministic, $\Sys_2'$ is now non-deterministic. As a result, the abstract problem is harder to solve since it is no longer a simple path-finding problem on a directed graph, but a reachability problem on a weighted directed \emph{forward hypergraph}.
That is, each transition corresponds to
a \emph{forward hyperarc} which is a hyperarc with one tail and multiple heads, see \cite{gallo1993directed} for an introduction to hypergraphs.
In addition, whereas the abstract problem $(\Sys_2,\Sigma_2)$ had two solutions, the controllers $\Cont_2$ given by~\eqref{eq:controller-1} and $\Cont_2'$ given by~\eqref{eq:controller-2}, the abstract problem $(\Sys_2',\Sigma_{2}')$ has only one solution, the controller $\Cont_2'$. So, not only may the abstract problem $(\Sys_2',\Sigma_{2}')$ be more difficult to solve, it may also admit less or no solution at all. 

\subsection{Comparison between \texorpdfstring{$\relTwoAbr$}{} and \texorpdfstring{$\relThreeAbr$}{}}

As previously mentioned, the $\relTwoAbr$ extends to the \relThree~\cite[V.2 Definition]{reissig2016feedback}, whose definition we recall.

\begin{definition}[$\relThreeAbr$]\label{def:R:FRR}
A relation $R \subseteq \set{X}_1\times \set{X}_2$ is a \emph{\relThree} from $\Sys_1$ to $\Sys_2$ if 
 for every $(\vect x_{1},\vect x_{2}) \in~R$:
\begin{enumerate}
    \item $\set{U}_2(x_2)\subseteq \set{U}_1(x_1)$;
    \item for every $\vect u \in \set{U}_2(x_2)$ and for every $\vect x_1'\in F_1(x_1, u)$ for every $\vect x_2'$ such that $\vect x_2'\in R(\vect x_1')$: $ \vect x_2'\in F_2(x_2, u)$.
\end{enumerate}
\end{definition}

The fact that $R$ is a \relThree{} from $\Sys_1$ to $\Sys_2$ will be denoted $\Sys_1 \preceq_R^{\relThreeAbr} \Sys_2$, and we write $\Sys_1 \preceq^{\relThreeAbr} \Sys_2$ if $\Sys_1 \preceq_R^{\relThreeAbr} \Sys_2$ holds for some $R$.

\begin{proposition}
Given two systems $\Sys_1$ and $\Sys_2$, if $\Sys_1\preceq_R^{\relThreeAbr} \Sys_2$, then $\Sys_1\preceq_R^{\relTwoAbr}~\Sys_2$.
\end{proposition}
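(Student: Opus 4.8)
The plan is to observe that $\relThreeAbr$ is exactly $\relTwoAbr$ specialized to the case where the concrete input equals the abstract input, so the proof amounts to verifying that the forced choice $u_1 = u_2$ is legitimate and that the local successor condition then transfers verbatim. Concretely, I would fix an arbitrary pair $(\vect x_1, \vect x_2) \in R$ and an arbitrary abstract input $\vect u_2 \in \set{U}_2(x_2)$, and aim to exhibit a witness $\vect u_1 \in \set{U}_1(x_1)$ satisfying the $\relTwoAbr$ condition of \Cref{def:WFRR}.

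The first step is to use condition~(1) of \Cref{def:R:FRR}, namely $\set{U}_2(x_2) \subseteq \set{U}_1(x_1)$, to justify setting $\vect u_1 \defEqual \vect u_2$. Since $\vect u_2 \in \set{U}_2(x_2)$, this inclusion immediately gives $\vect u_1 = \vect u_2 \in \set{U}_1(x_1)$, so $\vect u_1$ is a valid available concrete input at $\vect x_1$; this is precisely where the extra availability clause of $\relThreeAbr$ does its work, filling the role that the existential quantifier over $\vect u_1$ plays in the more permissive $\relTwoAbr$ definition.

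The second step is to invoke condition~(2) of \Cref{def:R:FRR} with $\vect u = \vect u_2 = \vect u_1$. That condition states that for every $\vect x_1' \in F_1(x_1, u_2)$ and every $\vect x_2'$ with $\vect x_2' \in R(\vect x_1')$ (equivalently $(\vect x_1', \vect x_2') \in R$), we have $\vect x_2' \in F_2(x_2, u_2)$. Rewriting $F_1(x_1, u_2) = F_1(x_1, u_1)$, this is word-for-word the successor requirement demanded by $\relTwoAbr$. Hence the chosen $\vect u_1$ works, and since $(\vect x_1, \vect x_2)$ and $\vect u_2$ were arbitrary, $\Sys_1 \preceq_R^{\relTwoAbr} \Sys_2$ follows.

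There is no real obstacle here: the only thing to watch is that one does \emph{not} accidentally need the freedom $\vect u_1 \ne \vect u_2$ that $\relTwoAbr$ grants but $\relThreeAbr$ forbids. The implication goes in the easy direction precisely because $\relThreeAbr$ is the stronger (more constrained) relation, so the single canonical choice $\vect u_1 = \vect u_2$ always suffices and the set-valued inclusion $R(\vect x_1') \subseteq F_2(x_2, u_2)$ is inherited directly. This also matches the remark already made in the text that $\relTwoAbr$ is a relaxation of $\relThreeAbr$ obtained by dropping the requirement that concrete and abstract inputs coincide.
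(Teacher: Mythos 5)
Your proof is correct and follows essentially the same route as the paper, which simply observes that $\relThreeAbr$ is the $\relTwoAbr$ definition with the witness forced to be $\vect u_1 = \vect u_2$ (legitimized by the availability inclusion $\set{U}_2(x_2)\subseteq\set{U}_1(x_1)$). You merely spell out in full the verification that the paper's one-line remark compresses.
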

\begin{proof}
It's the same definition as $\relTwoAbr$, with the additional constraint that the abstract and concrete input in the extended relation are the same, that is to say $\modif{I_R^{\relTwoAbr}}(x_1,x_2,u_2)=\{u_2\}\subseteq \set{U}_1(x_1)$.
\end{proof}

\begin{figure}
    \centering
    \includegraphics[width=0.42\textwidth]{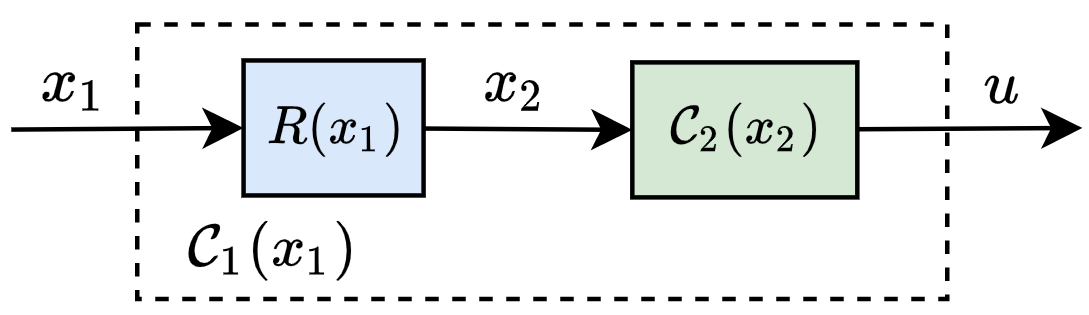}
    \caption{\ControllerTwo{} architecture $\Cont_1 = \Cont_2\circ R$ for $\Sys_1\preceq_R^{\relThreeAbr} \Sys_2$.
    }
    \label{fig:controller-architecture-FRR}
\end{figure}

In this case, the controller architecture in~\Cref{fig:controller-architecture-WFRR} simplifies with $\modif{I_R^{\relTwoAbr}}(x_1,x_2,u_2) = \{u_2\}$ in the architecture given in~\Cref{fig:controller-architecture-FRR}.
The concrete controller can be rewritten as $\Cont_1=\Cont_2\circ_{\modif{I_R^{\relTwoAbr}}} R = \Cont_2\circ R$, i.e., the functional composition of $\Cont_2$ and $R$ viewed as set-valued maps. 
This justifies the notation $\circ_{\modif{I_R^{\relTwoAbr}}}$.

This concrete controller only requires the abstract (or symbolic) state, it does not need the concrete state $x_1$; we say that the concretization is carried out using only symbolic information. 
Consequently, $\relThreeAbr$ restricts its class of concrete controllers exclusively to \emph{piecewise constant controllers}.
Therefore, the \relThree{} is well suited to the context where the exact state is not known and only quantified (or symbolic) state information is available.
On the other hand, when information on the concrete state is available, this is a major restriction, as the following example illustrates.

To motivate the use of a $\relTwoAbr$ versus a $\relThreeAbr$, we provide the following example where, for a given relation $R$ (i.e., a given discretization), we can solve the concrete problem following the three-step abstraction-based procedure described in~\Cref{Sec:Intro} with a $\relTwoAbr$ whereas this is impossible with a $\relThreeAbr$, i.e., using only symbolic information for the concretization.
\begin{figure}
    \centering
    \includegraphics[width=0.46\textwidth]{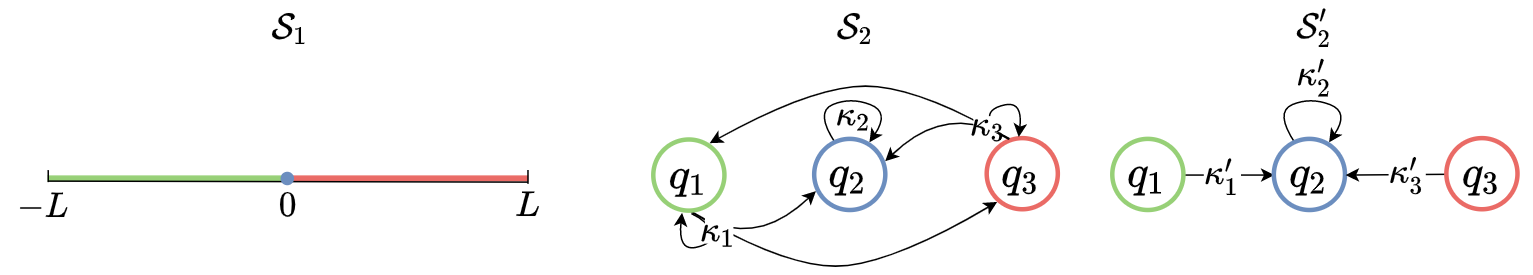}
    \caption{
    Three transition systems $\Sys_1= (\set{X}_1, \set{U}_1 , F_1)$, $\Sys_2= (\set{X}_2, \set{U}_2 , F_2)$, $\Sys_{2}'= (\set{X}_2, \set{U}_2', F_{2}')$ and a relation $R\subseteq \set{X}_1\times\set{X}_2$. Specifically, $\set{X}_1 = [-L, L]\subseteq \R$, $\set{U}_1 = \R$, $\set{X}_2 = \{q_1, q_2, q_3\}$, $\set{U}_2 = \{\kappa_1, \kappa_2, \kappa_3\}$, $\set{U}_2' = \{\kappa_1', \kappa_2', \kappa_3'\}$. The transition maps $F_2$ and $F_{2}'$ and the available inputs $\set{U}_2(.)$ and $\set{U}_2'(.)$ are clear from the illustration. Given the function $f(x,u)=x+u$, the available inputs $\set{U}_1(x_1) = \{u\in \set{U}_1 \mid f(x_1, u)\in \set{X}_1\}$ and the transition map $F_1(x_1, u) = f(x_1, u)$. Given the sets $\set{X}_{q_1}=[-L,0)$, $\set{X}_{q_2}=\{0\}$ and $\set{X}_{q_3}=(0,L]$, we define the relation $R\subseteq \set{X}_1\times \set{X}_2$ such that 
    $(x_1,q)\in R \Leftrightarrow x_1\in \set{X}_{q}$.
    The colors indicate the related states. We consider the concrete specification $\Sigma_1^{\text{Reach}} =[\set{X}_1, \{0\}, \myemptyset]$ and the associated abstract specifications $\Sigma_2^{\text{Reach}} = \Sigma_{2}^{\text{Reach}'}=[\set{X}_2, \{q_2\}, \myemptyset]$. 
    }
    \label{fig:WFRR-motivation-WFRR-vs-FRR}
\end{figure}

We consider the system $\Sys_1$ whose the dynamic consists in moving under translation on a segment of the real line, and with objective to reach $0$. 
The full definition of $\Sys_1$ is in~\Cref{fig:WFRR-motivation-WFRR-vs-FRR}.
 Any abstraction $S_2$ such that $\Sys_1\preceq^{\relThreeAbr}_R \Sys_2$ is constrained to have non-determinism for the cells $q_1$ and $q_3$, as exemplified by the system $\Sys_2$ given in~\Cref{fig:WFRR-motivation-WFRR-vs-FRR}.
 The problem here is that by using piecewise constant controllers for the concrete system, there will always be a portion around $0$ that overshoot its target. 
Indeed, we have $\kappa_1\in [0, L]$, $\kappa_2 = 0$ and $\kappa_3\in [-L,0]$, and therefore, for any admissible choice of $\kappa_1,\kappa_2,\kappa_3$, the resulting abstraction $\Sys_2$ is non-deterministic. 
 As a result, the abstract problem $(\Sys_2, \Sigma_2)$ has no solution. 
 However, we can build an abstraction $\Sys_2'$ such that $\Sys_1\preceq_R^{\relTwoAbr}\Sys_2'$ where $\kappa_1'(x_1) = \kappa_3'(x_1) = -x_1$ and $\kappa_2'(x_1) = 0$ are affine controllers. 
 Note that we have the same partition of state-space, but because we can have abstract inputs that are local state-dependent controllers instead of piecewise constant real inputs, we can solve the abstract problem $(\Sys_2',\Sigma_2')$.
For the abstract system $\Sys_2'$, the inputs $\kappa_1', \kappa_2'$ and $\kappa_3'$ can be interpreted as \emph{move to the right cell}, \emph{do not move} and \emph{move to the left cell} respectively.

By using all the inputs at our disposal and designing local state-dependent controllers, we can remove the non-determinism imposed by the discretization of the concrete system. This is crucial because abstraction-based control guarantees that the concrete controller successfully solves a control problem for the original system, provided that the abstract controller solves the associated abstract control problem, and it is worth noting that the level of non-determinism within the abstraction directly affects the feasibility of the abstract control problem. 
This example illustrates that the use of concrete state information can be beneficial to construct a \emph{practical} abstraction satisfying the \propTwo{}. 

\modif{
Finally, let us compare our work with another relation introduced in~\cite[Definition 6]{borri2018design} as the~\emph{strong alternating $\epsilon$-approximate simulation relation} (denoted $S$-$\relOneAbr$) which is characterized by symbolic concretization property, i.e., the existence of a concrete controller using only abstract state information. In fact, $S$-$\relOneAbr$ can be derived from $\relThreeAbr$ by relaxing condition (2) in~\Cref{def:R:FRR} in that of $\relOneAbr$, while $\relTwoAbr$ is obtained by relaxing condition (1) in~\Cref{def:R:FRR}.
Therefore, in the context of abstraction with overlapping cells, $S$-$\relOneAbr$ has the same drawback as $\relOneAbr$, in that it does not allow the use of the controller architecture described in~\Cref{fig:controller-architecture-FRR}, but requires the controller architecture given in~\Cref{fig:controller-architecture-ASR}.
}

\section{Conclusion}
\label{Sec:Conclusion}

We have introduced~\relTwo{}, which provides a framework that guarantees a simple control architecture, requiring only information about the current state. 
In addition, this concretization procedure is independent of the type of dynamical systems and specifications under consideration.
In particular, if the abstract controller is static, the concrete controller will also be static. 
We have additionally provided a precise characterization of this relation with a necessary and sufficient condition on the concretization architecture.

We have demonstrated that any \relOne{} can be extended to a \relTwo{} at the price of introducing additional non-determinism. Furthermore, we prove that $\relOneAbr$ and $\relTwoAbr$ coincide in the particular case of a deterministic quantizer, and thus that $\relOneAbr$ benefits from the \propTwo{} in this specific case.

In addition, we showed that this framework allows the use of overlapping cells (referred to as cover-based abstraction) and piecewise state-dependent controllers, enabling the design of low-level controllers within cells, in combination with high-level abstraction-based controllers.
This opens up new possibilities when co-creating the abstraction and the controller, as is done in so-called \emph{lazy abstractions}.


\bibliographystyle{IEEEtran}
\bibliography{main}

\end{document}